\documentclass[12pt]{amsart}
\usepackage{mathtools}
\usepackage{tikz}
\usepackage{pgfplots}
\pgfplotsset{compat=1.18}
\usepgfplotslibrary{colormaps}
\usepackage{palatino}
\usepackage{xcolor}
\usepackage[colorlinks = true,
            linkcolor = blue,
            urlcolor  = blue,
            citecolor = blue,
            anchorcolor = blue]{hyperref}
\usepackage[margin=1.2in]{geometry}
\newtheorem{theorem}{Theorem}[section]
\newtheorem{lemma}[theorem]{Lemma}
\newtheorem{proposition}[theorem]{Proposition}
\newtheorem{corollary}[theorem]{Corollary}

\theoremstyle{definition}

\newtheorem{remark}[theorem]{Remark}

\newcommand{\B}{\mathcal{B}}

\renewcommand{\H}{\mathcal{H}}
\newcommand{\Aut}{\operatorname{Aut}}
\title[Geometric Properties of $\Aut(B)$]
{Operator Geometry of Hilbert Ball Automorphisms\\}

\author{Saikat Roy}

\address[Roy]{School of Advanced Sciences, VIT-AP University, Beside AP Secretariat, Amaravati, 522241, Andhra Pradesh, India.}
\email{saikatroy.cu@gmail.com, saikat.roy@vitap.ac.in}

\subjclass[2020]{Primary 47A30, 32M15, 46B20; Secondary 46C05}

\keywords{Hilbert ball, M\"{o}bius transformation; block operator matrix; 
Birkhoff--James orthogonality; smooth points; inner automorphism; hyperbolic metric.}

\begin{document}

\hfill\\

\bigskip

\begin{abstract}
We consider the operator--theoretic model for the group of biholomorphic
automorphisms $Aut(B)$ of the unit ball $B$ of a complex Hilbert space $\H$ by representing each automorphism as a bounded linear operator on the augmented Hilbert space $\H\oplus \mathbb{C}$. Any member of $Aut(B)$ admits a natural block operator matrix representation acting on $\mathcal{H}\oplus\mathbb{C}$. We study the geometry of the subset
$M(\mathcal{H})$ of $\mathcal{B}(\mathcal{H}\oplus\mathbb{C})$ consisting
of these block operator matrices. It is shown that every element
corresponding to a non-rotation automorphism is a smooth point of
$\mathcal{B}(\mathcal{H}\oplus\mathbb{C})$. Orthogonality between two
such matrices is characterized geometrically by the antipodality of the
corresponding M\"{o}bius images of a boundary point of the ball. This
orthogonality characterization is applied to show that an inner
automorphism of $\Aut(B)$ that preserves Birkhoff--James orthogonality in both directions if and only if it is
conjugation by a pure rotation, yielding a rigidity result. The
normalized block matrices are $J$-unitary under a suitable normalization, where $J = \operatorname{diag}(I_{\mathcal{H}}, -1)$. We show that norm of such block matrices satisfy a submultiplicativity under a certain composition rule other than usual operator multiplication, and induce a metric on certain subsets of $Aut(B)$ which recover
the hyperbolic metric on the Hilbert ball. The symmetric
structure of Birkhoff--James orthogonality within $M(\mathcal{H})$ is
also studied: there are no left-symmetric points, while the only right-symmetric
points are pure rotations.
\end{abstract}

\medskip

\maketitle


\section{Introduction}

\noindent The automorphism group $\Aut(B)$ of the open unit ball $B$ of a complex
Hilbert space $\mathcal{H}$ is one of the central objects in infinite-dimensional complex function theory. Its elements are the M\"{o}bius transformations
\[
\phi_{a,U}(z) = U\frac{(P_a + \alpha(a)Q_a)z - a}{1 - \langle z, a \rangle},
\qquad z \in B,
\]
where $U$ is a unitary operator on $\mathcal{H}$, $a \in B$, $\alpha(a) = \sqrt{1 - \|a\|^2}$, and $P_a$, $Q_a$ denote the orthogonal projections onto $\mathbb{C}a$ and its orthogonal complement, respectively. Every $\phi_{a,U} \in \Aut(B)$ extends continuously to the closed unit ball $\overline{B}$, and the restriction to the unit sphere $\partial B$ is a homeomorphism of $\partial B$ onto itself \cite{Rudin, FV}.  The group $\Aut(B)$ plays a fundamental role in function theory, hyperbolic geometry, and the fixed point theory of holomorphic mappings \cite{Beardon-Short, Beardon-Wilker, Goebel-Reich}. The special elements $\phi_{0,U}$, corresponding to $a = 0$, are the \emph{pure rotations} and form a subgroup isomorphic to the group of unitary operators on $\H$.

\medskip

\noindent A natural operator--theoretic representation of $\Aut(B)$
arises by associating to each $\phi_{a,U}$ the block operator matrix
\[
M_{\phi_{a,U}} = \begin{bmatrix}
U(P_a + \alpha(a)Q_a) & -Ua \\
-a^* & 1
\end{bmatrix}
\]
acting on $\mathcal{H} \oplus \mathbb{C}$,  where the entries are understood as follows:
\begin{itemize}
\item $U(P_a + \alpha(a)Q_a) : \mathcal H \to \mathcal H$ is a bounded linear operator,
\item $-Ua : \mathbb C \to \mathcal H$ is the rank-one operator defined by
\[
\lambda \mapsto -\lambda\, Ua,
\]
\item $-a^* : \mathcal H \to \mathbb C$ is the bounded linear functional given by
\[
x \mapsto -\langle x, a \rangle,
\]
\item $1 : \mathbb C \to \mathbb C$ is the identity operator.
\end{itemize}
Accordingly, for $(x,\lambda) \in \mathcal H \oplus \mathbb C$,
\[
M_{\phi_{a,U}}(x,\lambda)
=
\left(
U(P_a + \alpha(a)Q_a)x - \lambda Ua,\;
-\langle x,a\rangle + \lambda
\right).
\]
In the special case whenever $U=I_\H$, the identity operator on $\H$, the block matrix $M_{\phi_{a,I}}$ associated to the automorphism $\phi_{a,I}$ is self-adjoint for any $a\in B$.
 
\subsection{Motivation and Main results} The assignment $\phi_{a,U} \mapsto M_{\phi_{a,U}}$ realizes $\Aut(B)$ as a subset $M(\H)$ of $\B(\mathcal{H} \oplus \mathbb{C})$ and opens a natural avenue for investigating the geometric properties of $\Aut(B)$ through operator space geometry, and this perspective is the central theme of the present article. Two geometric properties central to our investigation are smoothness and Birkhoff-James orthogonality. A nonzero element $x$ in a normed space $X$ is called a \emph{smooth point} if the norm on $X$ is G\^{a}teaux differentiable at $x$ \cite{James2, Roy-2024}. In a normed space $X$, an element $x$ is \emph{Birkhoff--James orthogonal} to $y$, written $x \perp_B y$, if $\|x + \lambda y\| \geq \|x\|$ for all scalars $\lambda \in \mathbb{C}$.

\medskip

\noindent We show that non-unitary points of $M \left(\H \right)$ corresponds smooth points in $B \left(\H\oplus \mathbb{C} \right)$ and this is one of the main results of this article.

\begin{theorem}\label{smooth: Aut}
Every element of $M \left(\H \right)$ which is not a pure rotation, represents a smooth point in $B \left(\H\oplus \mathbb{C} \right)$. 
\end{theorem}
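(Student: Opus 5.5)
The plan is to use the classical characterization of smooth points in $\B(\mathcal K)$ for a Hilbert space $\mathcal K$ (Holub, Abatzoglou, Kittaneh--Younis). A nonzero operator $T$ is smooth if and only if two conditions hold. First, $T$ attains its norm at a unit vector $x_0$ that is unique up to a unimodular scalar. Second, $\sup\{\|Tx\| : \|x\|=1,\ x\perp x_0\} < \|T\|$. Equivalently, $\|T\|^2$ is an isolated eigenvalue of $T^*T$ of multiplicity one, with the rest of $\sigma(T^*T)$ bounded away from it. We will verify this for $T = M_{\phi_{a,U}}$ with $\mathcal K = \H\oplus\mathbb{C}$ and $a\neq 0$.

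First we remove $U$. We have $M_{\phi_{a,U}} = (U\oplus 1)\,M_{\phi_{a,I}}$, and $U\oplus 1$ is unitary on $\H\oplus\mathbb{C}$. Hence
\[
M_{\phi_{a,U}}^*M_{\phi_{a,U}} = M_{\phi_{a,I}}^*M_{\phi_{a,I}} = N^2,
\qquad N := M_{\phi_{a,I}},
\]
where $N$ is self-adjoint, as noted in the introduction. It therefore suffices to find the spectral decomposition of $N$.

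Put $r=\|a\|\in(0,1)$ and $e=a/r$, and decompose
\[
\H\oplus\mathbb{C} = \bigl(\mathbb{C}e\oplus\mathbb{C}\bigr)\ \oplus\ \bigl(Q_a\H\oplus 0\bigr).
\]
Both summands reduce $N$. For $x\in Q_a\H$ we get $N(x,0)=(\alpha(a)x,0)$, because $\langle x,a\rangle=0$. On the two-dimensional piece, in the basis $(e,0),(0,1)$, a direct computation from the formula for $M_{\phi_{a,I}}(x,\lambda)$ gives the matrix $\begin{bmatrix}1&-r\\-r&1\end{bmatrix}$. Its eigenvalues are $1+r$, with eigenvector $(e,-1)/\sqrt2$, and $1-r$, with eigenvector $(e,1)/\sqrt2$. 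Hence
\[
\sigma(N^2)=\{(1+r)^2,\ (1-r)^2,\ 1-r^2\},
\]
where the value $1-r^2$ is omitted if $Q_a\H=0$. Since $r>0$, both $(1-r)^2$ and $1-r^2$ are strictly less than $(1+r)^2$. Consequently $\|M_{\phi_{a,U}}\| = 1+r$, and the top eigenvalue $(1+r)^2$ of $M^*M$ is simple, with eigenspace spanned by $x_0=(e,-1)/\sqrt2$.

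It remains to read off the two conditions of the smoothness criterion. The norm-attaining unit vectors of $M$ are exactly the unit vectors in $\ker(M^*M-\|M\|^2)=\mathbb{C}x_0$, which gives uniqueness up to scalars. On $x_0^\perp$ we have $\|Mx\|^2=\langle N^2x,x\rangle \le \max\{(1-r)^2,1-r^2\}\|x\|^2$, which gives the strict gap. The smoothness criterion then yields the theorem.

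The only delicate point is citing the right form of the criterion in infinite dimensions. There, norm attainment at a unique vector alone is not sufficient, and the strict gap on $x_0^\perp$ is needed. That gap is exactly what the explicit spectrum supplies. As a sanity check on why pure rotations are excluded: when $a=0$ the operator is unitary and $M^*M=I$, so the top eigenvalue is not simple once $\dim(\H\oplus\mathbb{C})\ge 2$.
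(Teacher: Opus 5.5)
Your proof is correct, and its core computation coincides with the paper's. Both use the factorization $M_{\phi_{a,U}}=(U\oplus 1)M_{\phi_{a,I}}$ with $M_{\phi_{a,I}}$ self-adjoint. Both split the space into two reducing pieces: $Q_a\H\oplus 0$, where the eigenvalue is $\alpha(a)$, and the two-dimensional block spanned by $(a/\|a\|,0),(0,1)$. Both identify $\frac{1}{\sqrt2}(a/\|a\|,-1)$ as the eigenvector for the top eigenvalue $1+\|a\|$.

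The difference is the smoothness criterion you use to conclude. You invoke the Holub--Abatzoglou--Kittaneh--Younis characterization of smooth points in the operator algebra of a Hilbert space: a norm-attaining direction unique up to scalars, plus a strict norm gap on its orthogonal complement. With that criterion you are done once $\sigma(M^*M)$ is computed, and passing to $M^*M=N^2$ disposes of $U$ immediately.

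The paper instead uses the general criterion of Corollary~\ref{smoothness-Hilbert}: the set of limits $\lim_n\langle Az_n,y_n\rangle$ over norming pairs must be a singleton. It then carries out by hand the analysis your cited theorem packages.
\begin{itemize}
\item Lemma~\ref{block matrix norm}, together with the gap $1+\|a\|>\alpha(a)$, forces norming sequences onto the two-dimensional block.
\item Compactness there gives subsequences along which $z_n$ and $\widetilde{U}^*y_n$ (with $\widetilde{U}=U\oplus 1$) converge to unimodular multiples of the same vector $\tilde z_0$.
\item This pins the support functional down as $A\mapsto\langle A\tilde z_0,\widetilde{U}\tilde z_0\rangle$.
\end{itemize}
In effect, the paper reproves the sufficiency half of your criterion in this special case.

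Your route is shorter and isolates exactly the spectral feature responsible for smoothness, namely a simple, isolated top eigenvalue of $M^*M$. The paper's route is self-contained relative to its preliminaries. It also explicitly shows that norming sequences converge, along subsequences, to unimodular multiples of $\tilde z_0$, which it then reuses in the proof of Theorem~\ref{ortho: aut}.
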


\noindent Also, Birkhoff-James orthogonality in $M \left(\H \right)$ is equivalent with the antipodality of certain points.

\begin{theorem}\label{ortho: aut}
Let $a\in B$ be nonzero. Then $M_{\phi_{a,U}}\perp_B M_{\phi_{b,V}}$ in $M \left(\H \right)$ if and only if $\phi_{a,U} \left(-a/\|a\| \right)$ and $\phi_{b,V} \left(-a/\|a\| \right)$ are antipodal points in the unit sphere $\partial B$.
\end{theorem}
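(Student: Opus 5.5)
The plan is to use Theorem~\ref{smooth: Aut}: since $a\neq 0$, $T:=M_{\phi_{a,U}}$ is a smooth point of $\B(\H\oplus\mathbb{C})$. For smooth points, Birkhoff--James orthogonality reduces to a single inner-product condition at the norming vector. So the steps are: compute $\|T\|$ and its norming vector $u$ explicitly; reduce $T\perp_B S$ (with $S=M_{\phi_{b,V}}$) to $\langle Su,Tu\rangle=0$; and translate that condition into a statement about $\phi_{a,U}(-e)$ and $\phi_{b,V}(-e)$, where $e=a/\|a\|$ and $r=\|a\|$.

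\emph{Norm and norming vector.} Decompose $\H\oplus\mathbb{C}=\big(\mathbb{C}e\oplus\mathbb{C}\big)\oplus\big(\{a\}^\perp\oplus 0\big)$.
\begin{itemize}
\item On $\{a\}^\perp\oplus 0$, $T$ acts as $\alpha(a)U$ into $\H\oplus 0$, an isometry scaled by $\alpha(a)<1$.
\item On $\mathbb{C}e\oplus\mathbb{C}$, writing $(se,\lambda)\mapsto\big((s-r\lambda)Ue,\,-rs+\lambda\big)$, the map is unitarily equivalent to $\begin{bmatrix}1&-r\\-r&1\end{bmatrix}$, with eigenvalues $1\pm r$.
\end{itemize}
The two ranges are orthogonal, so $T^*T$ is block diagonal. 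Hence $\|T\|=1+r$, and the norm is attained exactly on multiples of $u=(e,-1)/\sqrt2$, with $Tu=(1+r)(Ue,1)/\sqrt2$. Moreover $T^*T$ has the simple top eigenvalue $(1+r)^2$, and the rest of its spectrum lies in $\{(1-r)^2,\alpha(a)^2\}$, strictly below.

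\emph{Reduction.} By the Bhatia--\v{S}emrl theorem (infinite-dimensional version, via norming sequences), $T\perp_B S$ iff there is a sequence of unit vectors $x_n$ with $\|Tx_n\|\to\|T\|$ and $\langle Sx_n,Tx_n\rangle\to 0$. The spectral gap forces $x_n-\mu_n u\to0$ for suitable unimodular $\mu_n$. Therefore $T\perp_B S$ iff $\langle Su,Tu\rangle=0$. Conversely, if $\langle Su,Tu\rangle=0$, the expansion $\|T+\lambda S\|^2\ge\|(T+\lambda S)u\|^2=\|Tu\|^2+|\lambda|^2\|Su\|^2$ gives orthogonality directly, so this direction needs no smoothness at all.

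\emph{Translation to boundary points.} First, $\phi_{a,U}(-e)=U(-e-a)/(1+r)=-Ue$. Next,
\[
Su=\tfrac1{\sqrt2}\big(-V(P_b+\alpha(b)Q_b)e-Vb,\;\langle e,b\rangle-1\big)
\]
(the first coordinate taken with the correct signs from the definition of $M$). Comparing with $\phi_{b,V}(-e)=V\big((P_b+\alpha(b)Q_b)(-e)-b\big)/(1+\langle e,b\rangle)$, this gives $Su=c\,(\phi_{b,V}(-e),\,\pm1)/\sqrt2$. The scalar $c$ is a nonzero multiple of $1+\langle e,b\rangle$, which cannot vanish because $\|b\|<1$. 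Thus $\langle Su,Tu\rangle=0$ becomes
\[
\langle\phi_{b,V}(-e),\,\phi_{a,U}(-e)\rangle=\epsilon,\qquad \epsilon\in\{1,-1\}.
\]
Both points lie on $\partial B$, so by the equality case of Cauchy--Schwarz this says $\phi_{b,V}(-e)=\epsilon\,\phi_{a,U}(-e)$.

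The main obstacle I expect is the sign bookkeeping in the last step, which decides whether $\epsilon=-1$, giving antipodality as claimed. A quick pass with the displayed conventions seems to give $\epsilon=+1$, so I would recheck the second-coordinate signs of $M_{\phi_{b,V}}u$ and of $Tu$ carefully before concluding. A secondary point is justifying the infinite-dimensional norming-sequence step, which the spectral gap for $T^*T$ handles.
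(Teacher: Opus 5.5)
\textbf{The gap: the decisive sign is left open.} Your architecture is the paper's, and those steps are sound. You get $\|T\|=1+r$ and the unique norming direction $u=(e,-1)/\sqrt2$ from the $2\times2$ block on $\mathbb{C}e\oplus\mathbb{C}$. You collapse any Bhatia--\v{S}emrl norming sequence onto unimodular multiples of $u$, so that $T\perp_B S$ iff $\langle Tu,Su\rangle=0$. Your spectral-gap argument for $T^*T$ is a clean substitute for the paper's compactness argument, and you never actually need smoothness. You get sufficiency from the Pythagorean expansion at $u$. The problem is the last step, which is where the whole content of the theorem lies. You leave $\epsilon\in\{1,-1\}$ undetermined, and $\epsilon=+1$ would mean $\phi_{b,V}(-e)=\phi_{a,U}(-e)$, the opposite of antipodality. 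As written, the proposal only shows ``equal or antipodal,'' and your tentative pass points to the wrong alternative.

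\textbf{How to fix it.} The sign is not ambiguous; your displayed vectors are miscomputed. Your own block formula $(se,\lambda)\mapsto((s-r\lambda)Ue,\,-rs+\lambda)$, evaluated at $(s,\lambda)=(1,-1)$, gives last coordinate $-(1+r)$, not $+(1+r)$. Hence
\[
T(e,-1)=\bigl((1+r)Ue,\,-(1+r)\bigr)=-(1+r)\bigl(\phi_{a,U}(-e),\,1\bigr),\qquad \phi_{a,U}(-e)=-Ue .
\]
Your $Su$ is also wrong. Its first coordinate has the wrong overall sign. Its last coordinate $\langle e,b\rangle-1$ is not even proportional to $1+\langle e,b\rangle$, so the claimed form $c(\phi_{b,V}(-e),\pm1)$ does not follow from it. The correct computation is
\[
S(e,-1)=\bigl(V(P_b+\alpha(b)Q_b)e+Vb,\,-\langle e,b\rangle-1\bigr)=-\bigl(1+\langle e,b\rangle\bigr)\bigl(\phi_{b,V}(-e),\,1\bigr),
\]
since $\phi_{b,V}(-e)=-\bigl(V(P_b+\alpha(b)Q_b)e+Vb\bigr)/(1+\langle e,b\rangle)$. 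Both images therefore have the form (scalar)$\cdot(\phi(-e),1)$ with the same last entry, and
\[
\langle Tu,Su\rangle=\tfrac12\,(1+r)\,\overline{\bigl(1+\langle e,b\rangle\bigr)}\,\bigl[\langle\phi_{a,U}(-e),\phi_{b,V}(-e)\rangle+1\bigr].
\]
So $\epsilon=-1$, and the equality case of Cauchy--Schwarz gives antipodality. Your spurious $\epsilon=+1$ comes exactly from putting $+1$ in the last slot of $Tu$ while its first slot is $Ue=-\phi_{a,U}(-e)$. With this corrected computation, which is precisely the paper's, your argument closes.
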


\noindent Unlike orthogonality in inner product spaces, Birkhoff--James orthogonality is not symmetric in general \cite{Sain-normattain}. An element $x$ is called \emph{left-symmetric} (respectively, \emph{right-symmetric}) if $x \perp_B y$ implies $y \perp_B x$ (respectively, $y \perp_B x$ implies $x \perp_B y$) for all $y \in X$. A point is called a symmetric point, if it is both left and right-symmetric. In the present paper, we say that $M_{\phi_{a,U}} \in M(\mathcal{H})$ is \emph{left-symmetric} (respectively, \emph{right-symmetric}) in $M(\mathcal{H})$ if $M_{\phi_{a,U}} \perp_B M_{\phi_{b,V}}$ implies $M_{\phi_{b,V}} \perp_B M_{\phi_{a,U}}$ (respectively, $M_{\phi_{b,V}} \perp_B M_{\phi_{a,U}}$ implies $M_{\phi_{a,U}} \perp_B M_{\phi_{b,V}}$) for all $M_{\phi_{b,V}} \in M(\mathcal{H})$,
where the Birkhoff--James orthogonality is considered in $\mathcal{B}(\mathcal{H} \oplus \mathbb{C})$. A point in $M(\H)$ is said to be symmetric, if it is left as well as right-symmetric point in $M(\H)$. Characterizing symmetric points in operator algebras has been an active direction of research \cite{Mal-Paul-Sain-book,Turnsek, Komuro-Saito-Tanaka}. However, the study of symmetricity in nonlinear subsets remain absent in the literature. We initiate this study in this article for the nonlinear subset $M(\H)$ of $B \left(\H\oplus \mathbb{C} \right)$.
\begin{theorem}\label{thm:left}
In $M(\mathcal{H})$, there are no left-symmetric points, and the right-symmetric points are precisely the pure rotations.
\end{theorem}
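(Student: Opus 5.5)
The whole argument rests on Theorem~\ref{ortho: aut}, together with two pieces of bookkeeping. First, for $a\neq 0$ put $e_a=-a/\|a\|$. A direct computation gives $(P_a+\alpha(a)Q_a)e_a-a=(1+\|a\|)e_a$ and $1-\langle e_a,a\rangle=1+\|a\|$, so $\phi_{a,U}(e_a)=Ue_a$. Hence Theorem~\ref{ortho: aut} reads
\[
M_{\phi_{a,U}}\perp_B M_{\phi_{b,V}} \iff \phi_{b,V}(e_a)=-Ue_a .
\]
Second, a pure rotation $R_V:=M_{\phi_{0,V}}=V\oplus 1$ is unitary. By the Bhatia--\v{S}emrl criterion (every unit vector is a norming vector), $R_V\perp_B T$ holds if and only if $0\in\overline{W(R_V^*T)}$, where $W$ denotes the numerical range. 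Since $W(T^*)=\overline{W(T)}$, this relation is symmetric between two pure rotations.

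\emph{Pure rotations are right-symmetric.} Let $M_{\phi_{b,V}}\perp_B R_U$. If $b=0$, the symmetry just noted gives $R_U\perp_B M_{\phi_{0,V}}$. If $b\neq 0$, then $M_{\phi_{b,V}}$ is smooth by Theorem~\ref{smooth: Aut}, and its norm $1+\|b\|$ is attained at $x_0=(e_b,1)/\sqrt2$. This comes from the spectral picture: on $\operatorname{span}\{b/\|b\|\}\oplus\mathbb{C}$ the self-adjoint part has eigenvalues $1\pm\|b\|$, and on $Q_b\mathcal{H}$ it acts as $\alpha(b)$. The orthogonality $M_{\phi_{b,V}}\perp_B R_U$ is then equivalent to $\langle M_{\phi_{b,V}}x_0,R_Ux_0\rangle=0$, which is exactly the content behind Theorem~\ref{ortho: aut}. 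This puts $0\in W(R_U^*M_{\phi_{b,V}})$, so $R_U\perp_B M_{\phi_{b,V}}$.

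\emph{Non-rotations are not right-symmetric.} For $a\neq0$ I would exhibit a rotation $R_V$ with $R_V\perp_B M_{\phi_{a,U}}$ but $M_{\phi_{a,U}}\not\perp_B R_V$. Take $V=UW$ with $W=P_a-Q_a$, which assumes $\dim\mathcal{H}\ge2$. Then $Ve_a=Ue_a\neq -Ue_a$, so $M_{\phi_{a,U}}\not\perp_B R_V$ by the reformulation of Theorem~\ref{ortho: aut}. On the other hand,
\[
R_V^*M_{\phi_{a,U}}=\begin{bmatrix}P_a-\alpha(a)Q_a & -Wa\\ -a^* & 1\end{bmatrix}.
\]
Its quadratic form is $-\alpha(a)\|q\|^2<0$ at $(q,0)$ with $q\in Q_a\mathcal{H}$, and it is positive at $(0,1)$. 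I then need a point of the numerical range on the segment between these values, or another vector giving value zero. One candidate is a vector $(se_a+q,\lambda)$ with real coefficients, where the imaginary cross terms vanish and the real part can be tuned to $0$. This gives $R_V\perp_B M_{\phi_{a,U}}$. The case $\dim\mathcal{H}=1$ needs a separate small $2\times2$ computation: take $V=\zeta U$ with $\zeta$ near $-1$ but $\zeta\neq-1$, and check directly that $0$ lies in the elliptical numerical range.

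\emph{No left-symmetric points.} For a rotation $R_{V}$, the construction above, run with $U:=VW^{-1}$ and $a$ chosen arbitrarily nonzero, gives $R_V\perp_B M_{\phi_{a,U}}$ while $M_{\phi_{a,U}}\not\perp_B R_V$. For $M_{\phi_{a,U}}$ with $a\ne0$, I would pick $b\neq0$ not parallel to $a$ and define $V$ by prescribing $V\phi_{b,I}(e_a)=-Ue_a$. This forces $\phi_{b,V}(e_a)=-Ue_a$, hence $M_{\phi_{a,U}}\perp_B M_{\phi_{b,V}}$. I then use the remaining freedom in $V$ on a vector independent of $\phi_{b,I}(e_a)$ to violate the condition $\phi_{a,U}(e_b)=-Ve_b$, so that $M_{\phi_{b,V}}\not\perp_B M_{\phi_{a,U}}$.

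The main obstacle is this last step. One must check that $e_b$ is not forced, up to the constraint already imposed, to satisfy the reverse antipodality, i.e.\ that $\{\phi_{b,I}(e_a),e_b\}$ is linearly independent for a suitable non-parallel $b$. If that fails, I would fall back on a rotation $M_{\phi_{b,V}}$ with $b=0$ and $Ve_a=-Ue_a$, and then choose $V$ on $Q_a\mathcal{H}$ so that $0\notin\overline{W(R_V^*M_{\phi_{a,U}})}$. The one-dimensional case $\mathcal{H}=\mathbb{C}$ would likewise be handled by explicit $2\times2$ numerical-range estimates.
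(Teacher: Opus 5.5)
Right-symmetry of rotations is argued as in the paper, and your witnesses for the other failures are valid alternatives to the paper's. For $V=U(P_a-Q_a)$ the operator $R_V^*M_{\phi_{a,U}}$ is self-adjoint. Its quadratic form equals $-\alpha(a)$ at $(q,0)$ for a unit vector $q\in Q_a\mathcal{H}$, and $1$ at $(0,1)$, so $0\in W$ by convexity. For $\mathcal{H}=\mathbb{C}$ and $\zeta=-1$, the numerical range is a nondegenerate ellipse centred at $0$ (foci $\pm\alpha(a)$, minor axis $2|a|$), so $0$ stays inside for $\zeta$ near $-1$. The step you call the main obstacle is also immediate when $\dim\mathcal{H}\ge 2$. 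Indeed $Q_b\phi_{b,I}(e_a)=\alpha(b)Q_be_a/(1-\langle e_a,b\rangle)\neq 0$ whenever $a\notin\mathbb{C}b$, whereas $Q_be_b=0$. So $\phi_{b,I}(e_a)$ and $e_b$ are independent, and a phase change of $V$ on the component of $e_b$ orthogonal to $\phi_{b,I}(e_a)$ breaks the reverse condition.

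The genuine gap is the claim that non-rotations are not left-symmetric when $\mathcal{H}=\mathbb{C}$.
\begin{itemize}
\item There every $b$ is parallel to $a$, and the unimodular $v$ is completely determined by $M_{\phi_{a,u}}\perp_B M_{\phi_{b,v}}$. So there is no freedom left to exploit.
\item Your fallback cannot work in any dimension. If $Ve_a=-Ue_a$, then $M_{\phi_{a,U}}\perp_B R_V$. By the right-symmetry of rotations you have just proved, $R_V\perp_B M_{\phi_{a,U}}$ follows, so $0\notin\overline{W(R_V^*M_{\phi_{a,U}})}$ can never be arranged.
\item The witness must therefore be a non-rotation. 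Its reverse orthogonality is governed by the antipodality criterion, not by a numerical range, so ``$2\times 2$ numerical-range estimates'' do not address the issue.
\end{itemize}

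What is missing is a choice of $b$ together with a computation. Combine $v\phi_{b,I}(e_a)=-ue_a$ with the reverse condition $u\phi_{a,I}(e_b)=-ve_b$; this gives $\phi_{a,I}(e_b)\,\phi_{b,I}(e_a)=e_ae_b$. With $\hat a=a/|a|$ and $\hat b=b/|b|$, clearing denominators reduces this to $(|a|-|b|)(\hat a^2-\hat b^2)=0$. Hence any $b$ with $|b|\neq|a|$ and $\hat b\neq\pm\hat a$ works.

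The paper avoids the case split entirely. If both orthogonalities held, unitarity of $V$ would force $\langle\phi_{b,I}(e_a),e_b\rangle=\langle e_a,\phi_{a,I}(e_b)\rangle$. For $\|b\|\neq\|a\|$ this in turn forces $\langle a,b\rangle^2=\|a\|^2\|b\|^2$. Choosing $b$ to violate this makes every $V$ with $M_{\phi_{a,U}}\perp_B M_{\phi_{b,V}}$ a witness, in every dimension.
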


\noindent The orthogonality relation in $M(\H)$ naturally induces an orthogonality relation $\perp_{Aut}$ in $Aut(B)$, namely,
\[
\phi_{a,U}\perp_{Aut} \phi_{b,V} \quad \iff \quad M_{\phi_{a,U}}\perp_B M_{\phi_{b,V}}.
\]
We apply the preceding result to characterize inner automorphism of $Aut(B)$ that preserves Birkhoff-James orthogonality in both directions.
\begin{theorem}\label{thm: preserve}
Any inner automorphism in $Aut(B)$ preserves $\perp_{Aut}$ in both direction if and only if it is a conjugation by a pure rotation. 
\end{theorem}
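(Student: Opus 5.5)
The sufficiency direction reduces to an isometry argument, and the necessity direction to an explicit counterexample pair checked with Theorem \ref{ortho: aut}. Throughout I use two facts. First, Birkhoff--James orthogonality is homogeneous: $x\perp_B y$ iff $\alpha x\perp_B \beta y$ for nonzero scalars $\alpha,\beta$. Second, the assignment $\phi\mapsto M_\phi$ is multiplicative up to a nonzero scalar factor, $M_{\psi\circ\phi}=\gamma\, M_\psi M_\phi$ for some $\gamma\neq 0$. This is the usual projective (linear fractional) realization of Möbius maps, and I would verify it first by a direct block computation on $(x,1)$. Consequently, for an inner automorphism $C_\psi(\phi)=\psi\circ\phi\circ\psi^{-1}$,
\[
M_{C_\psi(\phi)}=\text{(nonzero scalar)}\cdot M_\psi M_\phi M_\psi^{-1},
\]
so $\perp_{Aut}$ is preserved by $C_\psi$ exactly when $T\mapsto M_\psi T M_\psi^{-1}$ preserves $\perp_B$ on $M(\H)$.

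\textbf{Sufficiency.} Let $\psi=\phi_{0,W}$. Then $M_\psi=\operatorname{diag}(W,1)$ is unitary on $\H\oplus\mathbb{C}$. Conjugation by a unitary is a linear isometry of $\B(\H\oplus\mathbb{C})$, so it preserves $\|T+\lambda S\|$ for all $\lambda$, and hence preserves $\perp_B$ in both directions. One also checks directly that $W\phi_{a,U}W^{*}=\phi_{Wa,\,WUW^*}$, so $C_\psi$ maps $M(\H)$ into itself.

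\textbf{Necessity.} Suppose $\psi=\phi_{c,W}$ with $c\neq 0$; composing with a rotation (already handled) we may assume $W=I$. I will exhibit $\phi\perp_{Aut}\phi'$ with $C_\psi(\phi)\not\perp_{Aut} C_\psi(\phi')$.

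Take $e=c/\|c\|$, $a=se$ with $0<s<1$, $\phi=\phi_{a,I}$ and $\phi'=\phi_{0,-I}$. With $\zeta_0=-a/\|a\|=-e$, a direct computation gives $\phi_{a,I}(-e)=-e$. Since $\phi_{0,-I}(-e)=e$, Theorem \ref{ortho: aut} gives $\phi\perp_{Aut}\phi'$.

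Now conjugate both maps.
\begin{itemize}
\item $C_\psi(\phi')=\psi\circ(-I)\circ\psi^{-1}$ is no longer a rotation. Its parameter is the preimage of $0$, namely $\psi(-c)=-2c/(1+\|c\|^2)\neq 0$.
\item $C_\psi(\phi)$ has parameter $a''=\psi(\phi^{-1}(c))$.
\end{itemize}
Everything lives in the complex line $\mathbb{C}e$, which is invariant under all maps involved, so the computation reduces to disc automorphisms. Applying Theorem \ref{ortho: aut} to the conjugated pair, I evaluate both conjugated maps at $-a''/\|a''\|$ and check that the images are not antipodal. If needed I would use the reverse order instead, since the conjugate of the rotation now has nonzero parameter and the theorem applies with it in the first slot.

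The conceptual reason this should fail is that $\psi$ acting on $\partial B$ does not preserve antipodal pairs unless $c=0$. Antipodality of boundary images is the only invariant that Theorem \ref{ortho: aut} tests, so a non-rotation conjugation cannot preserve it.

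\textbf{Main obstacle.} The hard step is the explicit computation: identifying the parameter and the distinguished boundary point $-a''/\|a''\|$ of the conjugated maps, and showing non-antipodality for every $c\neq 0$, possibly after tuning $s$. I would first carry this out in the one-dimensional disc slice, where all maps are $z\mapsto \lambda\frac{z-w}{1-\bar w z}$. If the generic computation is messy, I would choose $s$ near $0$ or near $1$ and compare limits, where the failure of antipodality should be transparent.
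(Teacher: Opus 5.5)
Your sufficiency argument is correct, and somewhat tidier than the paper's. You use that $M_{\phi_{0,W}}M_{\phi_{a,U}}M_{\phi_{0,W}}^{*}=M_{\phi_{Wa,WUW^*}}$ exactly, and that unitary conjugation is a linear isometry of $\B(\H\oplus\mathbb{C})$. This also covers $a=0$, where Theorem~\ref{ortho: aut} does not apply.

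The necessity part has a genuine gap. The pair you propose is preserved by $C_\psi$ in both orders, for every $s\in(0,1)$ and every $c\neq 0$, so no tuning of $s$ and no reordering will produce a counterexample. Write $c=re$ with $r=\|c\|$. After normalization, $\psi=\phi_{c,I}$ and $\phi=\phi_{se,I}$ both act as the identity on $\{(x,0):x\perp e\}$. On $\operatorname{span}\{(e,0),(0,1)\}$ they act as $\left[\begin{smallmatrix}\cosh\tau&-\sinh\tau\\ -\sinh\tau&\cosh\tau\end{smallmatrix}\right]$, with $\tanh\tau=r$ and $\tanh\tau=s$ respectively. These matrices commute, so $C_\psi(\phi)=\phi$.

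Moreover $\psi(\pm e)=\pm e$, which gives both orders:
\begin{itemize}
\item Forward: $C_\psi(\phi_{0,-I})(-e)=\psi(-\psi^{-1}(-e))=e$, which is antipodal to $C_\psi(\phi)(-e)=-e$. Hence $C_\psi(\phi)\perp_{\Aut}C_\psi(\phi')$ by Theorem~\ref{ortho: aut}.
\item Reverse: $C_\psi(\phi')$ has parameter $-\frac{2r}{1+r^2}e$, so its distinguished point is $+e$. Then $C_\psi(\phi')(e)=\psi(-e)=-e$ is antipodal to $C_\psi(\phi)(e)=e$, so orthogonality holds here too, just as $\phi'\perp_{\Aut}\phi$ holds before conjugation.
\end{itemize}
Your heuristic that $\psi$ destroys antipodality fails exactly here. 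The map $\psi$ fixes the antipodal pair $\{e,-e\}$, and that is the only pair your test ever evaluates.

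The missing idea is the paper's structural argument. A bijection preserving $\perp_{\Aut}$ in both directions sends right-symmetric elements to right-symmetric elements. Indeed, if $\eta\perp_{\Aut}C_\psi(\omega)$, write $\eta=C_\psi(\theta)$. Then $\theta\perp_{\Aut}\omega$, so $\omega\perp_{\Aut}\theta$ by right-symmetry of $\omega$, so $C_\psi(\omega)\perp_{\Aut}\eta$. By Theorem~\ref{thm:left}, the right-symmetric elements are exactly the pure rotations.

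You already computed that $C_\psi(\phi_{0,-I})$ has parameter $-2c/(1+\|c\|^2)\neq 0$. So it is not a pure rotation, and that is the required contradiction. The paper reaches the same point by evaluating $\psi\circ\phi_{0,U}\circ\psi^{-1}$ at $0$ for all unitaries $U$.

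If you want an explicit failing pair, build it from a witness of the non-right-symmetry of $C_\psi(\phi_{0,-I})$ (as in Part 2, Case II of the proof of Theorem~\ref{thm:left}), pulled back by $C_\psi^{-1}$. Automorphisms whose parameters are collinear with $c$ will not work.
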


\noindent As one of the important application of our result, we recover the hyperbolic metric through operator norm, and prove a submultiplicativity of norm which different that usual submultiplicativity of operator norm. For this we consider normalized block matrix representation $\widetilde{M}_{\phi_{a,U}}=\frac{1}{\alpha(a)}M_{\phi_{a,U}}$, and define fibre of a unitary operator $U\in B(\H)$, namely,
\[
F_{U} = \{\widetilde{M}_{\phi_{a,U}}:~a\in B\}.
\]
We define a multiplication $``\odot"$ in $F_{U}$ by
\[
\widetilde{M}_{\phi_{a,U}}\odot \widetilde{M}_{\phi_{b,U}} = \widetilde{M}_{\phi_{a,U}\circ \phi_{b,U}},
\]
where $``\circ"$ indicates the usual operation in $Aut(B)$. Consistent with the above terminologies, we prove the following:
\begin{theorem}\label{thm:hyp}
Let $U$ be any unitary operator on $\H$. Then
\begin{itemize}
    \item[(a)] $\|\widetilde{M}_{\phi_{a,U}}\odot \widetilde{M}_{\phi_{b,U}}\|\leq \|\widetilde{M}_{\phi_{a,U}}\|\|\widetilde{M}_{\phi_{b,U}}\|$.
    \item[(b)] The function $d_U:F_{U}\times F_{U}$ be defined as
\[
d_U(\widetilde{M}_{\phi_{a,U}},\widetilde{M}_{\phi_{b,U}}) = 2~ln~\|\widetilde{M}_{\phi_{a,U}}\odot \widetilde{M}_{\phi_{b,U}^{-1}}\|
\]
is a metric on $F_U$ and $(F_U,d_U)$ is isometric to $(B,d_{hyp})$, where $d_{hyp}$ denote the hyperbolic metric on $B$.
\end{itemize}
\end{theorem}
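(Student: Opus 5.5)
The whole argument rests on one formula, which I would establish first: the operator norm of a normalized block matrix depends only on $\|a\|$, and
\[
\|\widetilde{M}_{\phi_{a,U}}\| = \sqrt{\frac{1+\|a\|}{1-\|a\|}} = \exp\Big(\tfrac12\, d_{hyp}(0,a)\Big).
\]
Here $d_{hyp}(z,w)=2\tanh^{-1}\|\phi_{z,I}(w)\|$, so that $d_{hyp}(0,a)=\ln\frac{1+\|a\|}{1-\|a\|}$.

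\textbf{Step 1: the norm formula.} For $a=0$ the matrix is $U\oplus 1$, which is unitary and has norm $1$. For $a\neq 0$, put $e=a/\|a\|$ and $r=\|a\|$, and decompose
\[
\H\oplus\mathbb{C} = \big(\{a\}^\perp\oplus 0\big) \oplus \operatorname{span}\{(e,0),(0,1)\}.
\]
On $\{a\}^\perp\oplus 0$ the operator $M_{\phi_{a,U}}$ acts as $\alpha(a)U$, since $\langle x,a\rangle=0$ there. On the two-dimensional piece it sends $(e,0)\mapsto (Ue,-r)$ and $(0,1)\mapsto(-rUe,1)$. Composing with the unitary $U\oplus 1$ on the left, which does not change norms, we get an orthogonal direct sum of $\alpha(a)I$ and the $2\times 2$ matrix
\[
\begin{bmatrix}1&-r\\-r&1\end{bmatrix}.
\]
The eigenvalues of this matrix are $1\pm r$. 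Since $\alpha(a)=\sqrt{1-r^2}\le 1+r$, we get $\|M_{\phi_{a,U}}\|=1+r$. Dividing by $\alpha(a)=\sqrt{(1-r)(1+r)}$ gives the formula above.

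\textbf{Step 2: identifying $a$ from the automorphism.} For every $\phi_{c,W}$ we have $\phi_{c,W}(c)=0$, so $c=\phi_{c,W}^{-1}(0)$. Hence, for any automorphism $\psi$,
\[
\|\widetilde{M}_\psi\|=\exp\big(\tfrac12 d_{hyp}(0,\psi^{-1}(0))\big).
\]
This is what allows $\odot$ to be handled even if the composite $\phi_{a,U}\circ\phi_{b,U}$ has a rotational part other than $U$. The norm formula does not see the unitary part at all.

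\textbf{Proof of (a).} Let $\psi=\phi_{a,U}\circ\phi_{b,U}$. Then $\psi^{-1}(0)=\phi_{b,U}^{-1}(a)$. By invariance of $d_{hyp}$ under $\Aut(B)$, and using $\phi_{b,U}(0)=-Ub$,
\[
d_{hyp}(0,\phi_{b,U}^{-1}(a)) = d_{hyp}(\phi_{b,U}(0),a) \le d_{hyp}(-Ub,0)+d_{hyp}(0,a) = d_{hyp}(0,b)+d_{hyp}(0,a).
\]
Exponentiating half of both sides and applying Step 1 yields (a).

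\textbf{Proof of (b).} Let $\psi=\phi_{a,U}\circ\phi_{b,U}^{-1}$. Then $\psi^{-1}(0)=\phi_{b,U}(a)$, so
\[
2\ln\|\widetilde{M}_\psi\| = d_{hyp}(0,\phi_{b,U}(a)) = d_{hyp}(\phi_{b,U}(b),\phi_{b,U}(a)) = d_{hyp}(a,b),
\]
again by invariance and $\phi_{b,U}(b)=0$. The map $a\mapsto \widetilde{M}_{\phi_{a,U}}$ is a bijection $B\to F_U$: it is injective because $U$ is fixed and $a$ can be read off from the $(2,1)$ entry $-a^*/\alpha(a)$. Therefore $d_U$ is exactly the pullback of $d_{hyp}$ under this bijection. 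It follows immediately that $d_U$ is a metric and that $(F_U,d_U)$ is isometric to $(B,d_{hyp})$.

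\textbf{Expected obstacle.} The main points needing care are bookkeeping rather than depth. First, $\odot$ must be interpreted on products whose rotation part may differ from $U$, which Step 2 sidesteps. Second, the convention for $d_{hyp}$ must be the one matching the factor $2$; the formula $d_{hyp}(0,a)=\ln\frac{1+\|a\|}{1-\|a\|}$ is forced by Step 1.
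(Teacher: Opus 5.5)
Your proposal is correct. Part (b) is essentially the paper's argument, while part (a) takes a genuinely different route.

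\textbf{Part (b).} Both you and the paper reduce $2\ln\|\widetilde M_{\phi_{a,U}\circ\phi_{b,U}^{-1}}\|$ to the norm formula evaluated at the translation parameter of the composite. The paper uses $\psi(0)=U\phi_{a,I}(b)$ together with the defining formula for $d_{hyp}(a,b)$. You use $\psi^{-1}(0)=\phi_{b,U}(a)$ together with invariance; the two points have the same norm. Your explicit check that $a\mapsto\widetilde M_{\phi_{a,U}}$ is a bijection $B\to F_U$ supplies a point the paper leaves implicit.

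\textbf{Part (a).} The paper uses no metric property of $d_{hyp}$. It writes $(\phi_{a,U}\circ\phi_{b,U})(0)=U\phi_{a,I}(-Ub)$ and applies the identity of Lemma~\ref{lem:sym}, derived from $J$-unitarity, with $-Ub$ in place of $b$. It then bounds $|1+\langle Ub,a\rangle|\le 1+\|a\|\|b\|$ by Cauchy--Schwarz to get $\|\phi_{a,I}(-Ub)\|\le(\|a\|+\|b\|)/(1+\|a\|\|b\|)$. Under the increasing map $x\mapsto\sqrt{(1+x)/(1-x)}$ this bound factors into the product of the two norms. You instead quote the triangle inequality and the $\Aut(B)$-invariance of $d_{hyp}$.

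\textbf{What each approach buys.} Your route is shorter and shows plainly that (a) is just the triangle inequality through the base point $0$. It is valid for any two automorphisms, so the fact that the composite generally leaves $F_U$ causes no trouble. The paper's route is self-contained. That elementary estimate is exactly what lets the paper claim, in the remark after the theorem, that the triangle inequality for $d_U$ can be recovered from (a) independently. If (a) is proved your way, that derivation becomes circular, although your (b) never needs it.

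\textbf{Small slip in Step~1.} To reach the block form you must multiply on the left by $(U\oplus 1)^{-1}=U^*\oplus 1$, not by $U\oplus 1$.
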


\medskip

\noindent The proofs of Theorems \ref{smooth: Aut} and \ref{ortho: aut} are included in Section~\ref{sec:proof}. In Section~\ref{section:preservation}, we discuss the symmetricity and preservation of orthogonality. Section~\ref{section:Hyperbolic metric} deals with the operator norm and hyperbolic metric. In Section~\ref{sec:prelim} collect the preliminary facts to be used in the sequel.

\medskip

\section{Preliminaries}\label{sec:prelim}

\noindent Let $X$ be a Banach space and let $A$ be a bounded linear operator on $X$. A sequence of unit vectors $(z_n)$ in $X$ is called a \emph{norming sequence} for $A$ if
\[
\lim_{n \to \infty} \|Az_n\| = \|A\|.
\]
Norming sequences are the key ingredient in the characterization of Birkhoff--James orthogonality of operators.

\begin{lemma}[\cite{Bhatia-Semrl}]\label{BS-Theorem}
For $A, T \in \B(\mathcal{H})$, $A \perp_B T$ if and only if 
there exists a norming sequence $(x_n)$ for $A$ such that 
$\lim_n \langle Ax_n, Tx_n \rangle = 0$.
\end{lemma}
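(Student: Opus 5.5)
The proof splits into sufficiency, which is a one-line expansion, and necessity, which is a separation argument. For sufficiency, fix a norming sequence $(x_n)$ for $A$ with $\langle Ax_n,Tx_n\rangle\to 0$ and any $\lambda\in\mathbb{C}$. Expanding,
\[
\|(A+\lambda T)x_n\|^2=\|Ax_n\|^2+2\,\mathrm{Re}\big(\overline{\lambda}\langle Ax_n,Tx_n\rangle\big)+|\lambda|^2\|Tx_n\|^2\ \ge\ \|Ax_n\|^2+2\,\mathrm{Re}\big(\overline{\lambda}\langle Ax_n,Tx_n\rangle\big).
\]
Letting $n\to\infty$ gives $\|A+\lambda T\|^2\ge\|A\|^2$, so $A\perp_B T$.

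For necessity I will argue by contraposition and use a convexity statement. Write $\langle Ax,Tx\rangle=\langle T^*Ax,x\rangle$. Let $E_\varepsilon$ be the spectral projection of $A^*A$ for the interval $[\|A\|^2-\varepsilon,\|A\|^2]$; it is nonzero for every $\varepsilon>0$. Put
\[
K_\varepsilon=\overline{\{\langle T^*Ax,x\rangle:\ x\in \mathrm{ran}\,E_\varepsilon,\ \|x\|=1\}},\qquad K=\bigcap_{\varepsilon>0}K_\varepsilon .
\]
Each $K_\varepsilon$ is the closed numerical range of the compression $E_\varepsilon T^*A E_\varepsilon|_{\mathrm{ran}E_\varepsilon}$. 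By the Toeplitz--Hausdorff theorem it is therefore compact and convex, and it is nonempty. Since the $K_\varepsilon$ are nested, $K$ is a nonempty compact convex set. Every point of $K$ is a limit $\lim\langle Ax_n,Tx_n\rangle$ along unit vectors $x_n\in\mathrm{ran}\,E_{1/n}$, and such vectors satisfy $\|Ax_n\|^2\ge\|A\|^2-1/n$, so they form a norming sequence. Hence if $0\in K$ we are done.

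So suppose $0\notin K$. By compactness, $0\notin K_\varepsilon$ for some $\varepsilon$. By convexity and after a rotation $T\mapsto e^{i\theta}T$, there is $\delta>0$ with $\mathrm{Re}\langle Ax,Tx\rangle\ge\delta$ for all unit $x\in\mathrm{ran}\,E_\varepsilon$. I will then show $\|A-tT\|<\|A\|$ for small $t>0$, which contradicts $A\perp_B T$. Split a unit vector as $x=u+v$ with $u=E_\varepsilon x$. Then $Au\perp Av$ and $\|Av\|^2\le(\|A\|^2-\varepsilon)\|v\|^2$. Expanding $\|(A-tT)x\|^2=\|Ax\|^2-2t\,\mathrm{Re}\langle Ax,Tx\rangle+t^2\|Tx\|^2$ gives two cases:
\begin{itemize}
\item If $\|v\|^2\ge\eta$, the term $\|Ax\|^2\le\|A\|^2-\varepsilon\eta$ dominates, and the remaining terms are $O(t)$.
\item If $\|v\|^2<\eta$, then $\mathrm{Re}\langle Ax,Tx\rangle\ge\delta\|u\|^2-C\sqrt\eta\ge\delta/2$ once $\eta$ is small, with $C$ depending on $\|A\|,\|T\|$. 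This gives $\|(A-tT)x\|^2\le\|A\|^2-t\delta+t^2\|T\|^2$.
\end{itemize}
Choosing $\eta$ first and then $t$ small makes both cases uniformly below $\|A\|^2$, giving the contradiction.

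The main obstacle is this uniformity step. One must pass from control on the spectral subspace $\mathrm{ran}\,E_\varepsilon$ to a uniform strict decrease of the norm on all unit vectors, including the cross terms between $u$ and $v$. I expect it to work cleanly because $Au\perp Av$, which follows from $E_\varepsilon$ commuting with $A^*A$. The key structural input is the convexity supplied by Toeplitz--Hausdorff.
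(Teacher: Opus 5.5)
Your argument is correct and complete. The paper gives no proof of this lemma; it is simply quoted from Bhatia--\v{S}emrl. What you have written is essentially the standard convexity argument behind that result. It uses Toeplitz--Hausdorff convexity of the closed numerical ranges of $T^*A$ compressed to the spectral subspaces $\mathrm{ran}\,E_\varepsilon$ of $A^*A$ near $\|A\|^2$, the nested intersection $K$, strict separation of $0$ from some $K_\varepsilon$, and a first-order perturbation estimate for $\|A-tT\|$.

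The uniformity step you flag as the main obstacle is already settled by your two cases. When $\|v\|^2<\eta$, the cross terms are at most $3\|A\|\|T\|\sqrt{\eta}$. So after fixing $\eta$, take $0<t\le \delta/(2\|T\|^2)$ with $2t\|A\|\|T\|+t^2\|T\|^2\le\varepsilon\eta/2$. This gives $\|A-tT\|^2\le\|A\|^2-\min\{\varepsilon\eta/2,\,t\delta/2\}$ uniformly over unit vectors. Here $T\neq 0$ is automatic once $\delta>0$.
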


\noindent Smoothness of operators on a Banach space can also be characterized in terms of norming sequences.

\begin{lemma}[\cite{Roy-2024}]\label{smoothness}
Let $A$ be a nonzero operator acting on a Banach space $X$, then $A$ is a smooth point in $B \left(X \right)$ if and only if the collection
\[
 \left\{\mu\in \mathbb{C}:~\lim_n x_n^* \left(Tx_n \right)=\mu,~ \left( (x_n,x_n^*) \right)\subset S_X\times S_{X^*},~\lim_n x_n^* \left(Ax_n \right)=\|A\| \right\}
\]
is a singleton set for any $T\in B \left(X \right)$.
\end{lemma}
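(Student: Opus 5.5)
The statement is a characterization of smoothness in $\B(X)$. I would prove it through the one-sided G\^{a}teaux derivative of the norm, and I would not try to identify the support functionals of $A$ in $\B(X)^*$ directly.

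First I recall the standard facts. $A\neq 0$ is smooth in $\B(X)$ if and only if $A$ has a unique support functional. Equivalently, for every $T\in\B(X)$,
\[
D^+(A,T):=\lim_{t\to 0^+}\frac{\|A+tT\|-\|A\|}{t}=\max\{\operatorname{Re} f(T): f\in \B(X)^*,\ \|f\|=1,\ f(A)=\|A\|\},
\]
and smoothness is the same as $D^+(A,T)=-D^+(A,-T)$ for all $T$. For a fixed $T$, let $\Lambda_T$ denote the set in the lemma.

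The key step is to show that for every $T$,
\[
D^+(A,T)=\sup\{\operatorname{Re}\mu:\mu\in\Lambda_T\}
\]
and that the supremum is attained.

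For the inequality ``$\geq$'', take $(x_n,x_n^*)\in S_X\times S_{X^*}$ with $x_n^*(Ax_n)\to\|A\|$ and $x_n^*(Tx_n)\to\mu$. Then $\|A+tT\|\geq \operatorname{Re}x_n^*((A+tT)x_n)$, and letting $n\to\infty$ gives $\|A+tT\|\geq \|A\|+t\operatorname{Re}\mu$. Hence $D^+(A,T)\ge\operatorname{Re}\mu$.

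For the reverse inequality, set $t_n=1/n$. Since $\|S\|=\sup\{|x^*(Sx)|:(x,x^*)\in S_X\times S_{X^*}\}$, I can choose $(x_n,x_n^*)$ with
\[
x_n^*((A+t_nT)x_n)\ \text{real and}\ >\|A+t_nT\|-t_n^2.
\]
Then
\[
\|A+t_nT\|-t_n^2<\operatorname{Re}x_n^*(Ax_n)+t_n\operatorname{Re}x_n^*(Tx_n)\le \|A\|+t_n\operatorname{Re}x_n^*(Tx_n).
\]
From this I get two things:
\begin{itemize}
\item $\operatorname{Re}x_n^*(Ax_n)\to\|A\|$. Combined with $|x_n^*(Ax_n)-\|A\||\to0$, which follows because $x_n^*((A+t_nT)x_n)$ is real and $t_nT\to0$, this gives $x_n^*(Ax_n)\to\|A\|$.
\item $\dfrac{\|A+t_nT\|-\|A\|}{t_n}\le \operatorname{Re}x_n^*(Tx_n)+t_n$.
\end{itemize}
The sequence $x_n^*(Tx_n)$ is bounded, so I pass to a convergent subsequence with limit $\mu_0\in\Lambda_T$. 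This yields $D^+(A,T)\le\operatorname{Re}\mu_0$, which proves the claim with attainment.

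With the key step in hand, both directions follow.

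If $\Lambda_T$ is a singleton $\{\mu_T\}$ for every $T$, note that $\Lambda_{-T}=-\Lambda_T$. Hence
\[
D^+(A,T)=\operatorname{Re}\mu_T=-\operatorname{Re}\mu_{-T}=-D^+(A,-T),
\]
and $A$ is smooth.

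Conversely, suppose $A$ is smooth, with unique support functional $f$. The identity gives $\operatorname{Re}\mu\le \operatorname{Re}f(T)$ for all $\mu\in\Lambda_T$. Applying the same inequality to $-T$ gives $\operatorname{Re}\mu\ge\operatorname{Re}f(T)$, using $\Lambda_{-T}=-\Lambda_T$. So $\operatorname{Re}\mu=\operatorname{Re}f(T)$. Applying this to $iT$, using $\Lambda_{iT}=i\Lambda_T$, fixes the imaginary part as well. Therefore $\Lambda_T=\{f(T)\}$; it is nonempty by the attainment in the key step.

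I expect the main obstacle to be the upper bound on $D^+(A,T)$. There one must extract, from near-norming pairs for $A+t_nT$, an admissible sequence for $A$ itself. The point that needs care is that $x_n^*(Ax_n)\to\|A\|$ holds as a limit of complex numbers, not merely of real parts. This is handled by choosing the pairs so that $x_n^*((A+t_nT)x_n)$ is real, which is possible by rotating $x_n^*$ by a unimodular scalar.
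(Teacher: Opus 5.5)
Your proof is correct. The paper gives no proof of this lemma: it is quoted from \cite{Roy-2024} and then only applied, through Corollary~\ref{smoothness-Hilbert}, in both directions. So there is no in-paper argument to compare against, and what you have written is a complete, self-contained proof.

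Working with the one-sided derivative is well suited to a general Banach space $X$. It never requires a description of the support functionals in $\B(X)^*$, which is not available for general $X$. It also gives something sharper than the lemma: $\Lambda_T\neq\emptyset$ and
\[
D^+(A,T)=\max\{\operatorname{Re}\mu:\mu\in\Lambda_T\}.
\]
Both implications then follow from the symmetries $\Lambda_{-T}=-\Lambda_T$ and $\Lambda_{iT}=i\Lambda_T$. In the smooth case you also identify the unique element of $\Lambda_T$ as $f(T)$, where $f$ is the support functional of $A$. This explains the value computed at the end of the proof of Theorem~\ref{smooth: Aut}: it is the support functional of $M_{\phi_{a,U}}$ evaluated at the test operator.

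One simplification is available. The step you flag as the main obstacle is automatic. Since $|x_n^*(Ax_n)|\le\|A\|$, the convergence $\operatorname{Re}x_n^*(Ax_n)\to\|A\|$ already forces $\operatorname{Im}x_n^*(Ax_n)\to 0$. Rotating $x_n^*$ is useful only so that $x_n^*((A+t_nT)x_n)$ splits as $\operatorname{Re}x_n^*(Ax_n)+t_n\operatorname{Re}x_n^*(Tx_n)$.
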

\noindent If $X = \mathcal{H}$ is a Hilbert space, then identifying $x_n^* = \langle \cdot, y_n \rangle$ for each $n\in \mathbb{N}$, via the Riesz representation theorem, the preceding lemma takes the following form.

\begin{corollary}\label{smoothness-Hilbert}
Let $A$ be a nonzero operator acting on a Hilbert space $\H$. Then $A$ is a smooth point of 
$\mathcal{B}(\mathcal{H})$ if and only if the collection
\[
\left\{\mu \in \mathbb{C} : \lim_n \langle Tx_n, y_n \rangle 
= \mu,\ (x_n), (y_n) \subset S_{\mathcal{H}},\ 
\lim_n \langle Ax_n, y_n \rangle = \|A\|\right\}
\]
is a singleton set for every $T \in \mathcal{B}(\mathcal{H})$.
\end{corollary}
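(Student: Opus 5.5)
The plan is to obtain the corollary directly from Lemma~\ref{smoothness} with $X=\H$, by showing that the two sets of scalars in question coincide for every $T\in\B(\H)$. The only tool needed is the Riesz representation theorem. It says the map $R:\H\to\H^*$, $R(y)=\langle\cdot,y\rangle$, is a conjugate-linear bijective isometry. In particular, $R$ maps the unit sphere $S_{\H}$ onto the unit sphere $S_{\H^*}$.

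First fix a nonzero $A\in\B(\H)$ and an arbitrary $T\in\B(\H)$. Let $\Sigma_1(T)$ be the set of Lemma~\ref{smoothness} and $\Sigma_2(T)$ the set of the corollary. Suppose $\mu\in\Sigma_1(T)$, realised by a sequence $((x_n,x_n^*))\subset S_{\H}\times S_{\H^*}$ with $x_n^*(Ax_n)\to\|A\|$ and $x_n^*(Tx_n)\to\mu$. Since $R$ is onto $S_{\H^*}$, we may write $x_n^*=R(y_n)$ with $y_n\in S_{\H}$. Then
\[
x_n^*(Ax_n)=\langle Ax_n,y_n\rangle \quad\text{and}\quad x_n^*(Tx_n)=\langle Tx_n,y_n\rangle,
\]
so $\mu\in\Sigma_2(T)$, witnessed by $(x_n),(y_n)$.

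Conversely, suppose $\mu\in\Sigma_2(T)$ is realised by $(x_n),(y_n)\subset S_{\H}$. Put $x_n^*=R(y_n)$. Since $\|x_n^*\|=\|y_n\|=1$, the same identities show that $((x_n,x_n^*))$ witnesses $\mu\in\Sigma_1(T)$. Hence $\Sigma_1(T)=\Sigma_2(T)$ for every $T$. Lemma~\ref{smoothness} then shows that $A$ is smooth exactly when each $\Sigma_2(T)$ is a singleton.

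I expect no serious obstacle here. The only points needing care are that $R$ is surjective and norm-preserving, so sequences in $S_{\H^*}$ and $S_{\H}$ correspond exactly, and that the conjugate-linearity of $R$ plays no role because we only evaluate functionals, never take linear combinations of them.
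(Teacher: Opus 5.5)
Your proof is correct and takes the same route as the paper, which derives the corollary from Lemma~\ref{smoothness} by identifying $x_n^*=\langle\cdot,y_n\rangle$ via the Riesz representation theorem. You have written out the verification the paper leaves implicit: the two sets of limits coincide for every $T$, because the Riesz map is an isometric bijection of $S_{\H}$ onto $S_{\H^*}$.
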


\noindent The following lemma is concerning about norm attainment and norming sequence of operators that admit a reducing subspace.
\begin{lemma}\label{block matrix norm}
Let $\mathcal{H}_0$ be a closed proper subspace of $\mathcal{H}$, and let $A \in \B(\mathcal{H})$ be such that $\mathcal{H}_0$ is a reducing subspace of $A$. Then 
\[
\|A\| = \max\left\{\|AP_0\|, \|A(I-P_0)\|\right\},
\]
where $P_0$ denotes the orthogonal projection onto $\mathcal{H}_0$. Moreover, if $\|A\| = \|AP_0\| > \|A(I-P_0)\|$, then $(z_n)$ is a norming sequence for $A$ if and only if $(z_n)$ is a norming sequence 
for $AP_0$.
\end{lemma}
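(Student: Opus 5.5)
Since $\mathcal{H}_0$ reduces $A$, the operator $A$ commutes with $P_0$. Write $P_1 = I-P_0$. Then $A = AP_0 + AP_1$, where $AP_0 = P_0AP_0$ maps into $\mathcal{H}_0$ and $AP_1 = P_1AP_1$ maps into $\mathcal{H}_0^\perp$. For a unit vector $z$, set $z_0 = P_0 z$ and $z_1 = P_1 z$, so that $\|z_0\|^2 + \|z_1\|^2 = 1$. Because $Az_0 \in \mathcal{H}_0$ and $Az_1 \in \mathcal{H}_0^\perp$, the Pythagorean identity gives
\[
\|Az\|^2 = \|Az_0\|^2 + \|Az_1\|^2 \le \|AP_0\|^2\|z_0\|^2 + \|AP_1\|^2\|z_1\|^2 \le \max\{\|AP_0\|,\|AP_1\|\}^2 .
\]
This yields $\|A\| \le \max\{\|AP_0\|,\|AP_1\|\}$. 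The reverse inequality is immediate, since $\|AP_i\| \le \|A\|\,\|P_i\| \le \|A\|$. This proves the norm formula.

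For the ``moreover'' part, put $m = \|AP_0\| = \|A\|$ and $s = \|AP_1\| < m$.

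Suppose first that $(z_n)$ is a norming sequence for $A$. The display above, refined, gives
\[
\|Az_n\|^2 \le m^2\|P_0z_n\|^2 + s^2\|P_1 z_n\|^2 = m^2 - (m^2 - s^2)\|P_1 z_n\|^2 .
\]
Letting $n\to\infty$, the left side tends to $m^2$. Since $m^2 - s^2 > 0$, this forces $\|P_1 z_n\| \to 0$. Consequently
\[
\|AP_0 z_n - Az_n\| = \|AP_1 z_n\| \le s\|P_1 z_n\| \to 0,
\]
and therefore $\|AP_0 z_n\| \to m = \|AP_0\|$. So $(z_n)$ is a norming sequence for $AP_0$.

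Conversely, suppose $(z_n)$ is a norming sequence for $AP_0$. Since $AP_0 z_n$ and $AP_1 z_n$ are orthogonal,
\[
\|Az_n\|^2 = \|AP_0z_n\|^2 + \|AP_1z_n\|^2 \ge \|AP_0 z_n\|^2 \to m^2 .
\]
Combined with $\|Az_n\| \le \|A\| = m$, this gives $\|Az_n\| \to \|A\|$, so $(z_n)$ is a norming sequence for $A$.

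The only delicate step is the first direction of the ``moreover'' part. There the strict inequality $s < m$ is what pushes a norming sequence into $\mathcal{H}_0$ asymptotically; without it, a norming sequence could keep part of its mass in $\mathcal{H}_0^\perp$.
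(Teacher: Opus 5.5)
Your proof is correct and follows essentially the same argument as the paper. Both use the Pythagorean splitting $\|Az\|^2=\|AP_0z\|^2+\|A(I-P_0)z\|^2$ for the norm formula, the estimate $\|Az_n\|^2\le m^2-(m^2-s^2)\|P_1z_n\|^2$ to force $\|P_1z_n\|\to 0$, and the lower bound $\|Az_n\|\ge\|AP_0z_n\|$ for the converse; the only difference is that the paper inserts an auxiliary $\lambda$ strictly between $\|A(I-P_0)\|$ and $\|AP_0\|$, which is unnecessary since $s<m$ already gives $m^2-s^2>0$.
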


\begin{proof}
Since $\H_0$ is a reducing subspace of $A$, let $T\oplus S$ be the block matrix representation of $A$ with respect to the orthogonal decomposition $\H_0\oplus \H_0^\perp$, where $T:=A|_{\H_0}:\H_0\to \H_0$ and $S:=A|_{\H_0^\perp}:\H_0^\perp\to \H_0^\perp$. For any unit vector $z \in \mathcal{H}$ we write $z = x + y$ with $x = P_0 z \in \mathcal{H}_0$ and $y = (I-P_0)z \in \mathcal{H}_0^{\perp}$. Since, $P_0$ commutes with $A$, we have
\[
\|Az\|^2 = \|AP_0 z\|^2 + \|A(I-P_0)z\|^2 
\leq \max\left\{\|AP_0\|^2, \|A(I-P_0)\|^2\right\}\|z\|^2.
\]
Since $\|A\| \geq \|AP_0\|$ and $\|A\| \geq \|A(I-P_0)\|$, we conclude that
\[
\|A\| = \max\left\{\|AP_0\|, \|A(I-P_0)\|\right\}.
\]

\medskip

\noindent Now suppose $\|A\| = \|AP_0\| > \|A(I-P_0)\|$, and let $(z_n)$ be a norming sequence for $A$. Write $z_n = x_n + y_n$ with $x_n \in \mathcal{H}_0$ and $y_n \in \mathcal{H}_0^{\perp}$, so that 
$\|x_n\|^2 + \|y_n\|^2 = 1$ for each $n$. Choose $\lambda$ with 
$\|AP_0\| > \lambda > \|A(I-P_0)\|$. Then
\begin{align*}
\|Az_n\|^2 &= \|AP_0 x_n\|^2 + \|A(I-P_0)y_n\|^2 \\
&\leq \|AP_0\|^2\|x_n\|^2 + \lambda^2\|y_n\|^2 \\
&= \|AP_0\|^2(\|x_n\|^2 + \|y_n\|^2) 
- (\|AP_0\|^2 - \lambda^2)\|y_n\|^2 \\
&= \|AP_0\|^2 - (\|AP_0\|^2 - \lambda^2)\|y_n\|^2.
\end{align*}
Since $\|Az_n\| \to \|A\| = \|AP_0\|$ and $\|AP_0\|^2 - \lambda^2 > 0$, 
it follows that $\|y_n\| \to 0$, and consequently $\|x_n\| \to 1$. 
Therefore,
\[
\lim_n \|AP_0 z_n\|^2 = \lim_n \|AP_0 x_n\|^2 = \lim_n \|Az_n\|^2 
= \|A\|^2 = \|AP_0\|^2,
\]
which shows $(z_n)$ is a norming sequence for $AP_0$. Conversely, if 
$(z_n)$ is a norming sequence for $AP_0$, then
\[
\|A\|^2 \geq \|Az_n\|^2 \geq \|AP_0 z_n\|^2 \to \|AP_0\|^2 = \|A\|^2,
\]
and hence $(z_n)$ is a norming sequence for $A$. This completes the 
proof.
\end{proof}

\noindent We also make note of the fact that any member $M_{\phi_{a,U}}$ of $M(\H)$ satisfies the $J$-contraction identity
\[
M_{\phi_{a,U}}^{\ast} J M_{\phi_{a,U}} = \alpha(a)^2 J,
\]
where $J = \operatorname{diag}(I_{\mathcal{H}}, -1)$ is the signature operator on
$\mathcal{H} \oplus \mathbb{C}$. The normalized matrix
$\widetilde{M}_{a,U} = \frac{1}{\alpha(a)} M_{\phi_{a,U}}$ therefore satisfies
$\widetilde{M}_{a,U}^{\ast} J \widetilde{M}_{a,U} = J$, making it a
$J$-unitary operator. Since $\alpha(a)$ is a positive scalar, this normalization
leaves the underlying M\"{o}bius map $\phi_{a,U}$ unchanged.

\section{Smoothness and orthogonality in $M(\H)$}\label{sec:proof}

\noindent In this section we prove that the the elements of $M(\mathcal{H})$ that are not pure rotations are smooth points in $\mathcal{B}(\mathcal{H}\oplus\mathbb{C})$, and translates Birkhoff--James orthogonality into antipodality on the unit sphere $\partial B$.

\subsection{Smoothness}\hfill\\

\noindent Pure rotations are never smooth points in 
$\mathcal{B}(\mathcal{H}\oplus\mathbb{C})$. Let 
$M_{0,U} = \mathrm{diag}(U,1)$ be a pure rotation. Since 
$M_{0,U}$ is unitary, $\|M_{0,U}\| = 1$ and every unit vector 
is a norming vector for $M_{0,U}$. Let $e_1, e_2 \in 
\mathcal{H}\oplus\mathbb{C}$ be any two orthonormal vectors, 
set $f_i = M_{0,U}e_i$ for $i = 1, 2$, and consider the 
rank-one operator
\[
T = f_1 \otimes e_1,
\]
that is, $T(x) = \langle x, e_1\rangle f_1$ for $x \in 
\mathcal{H}\oplus\mathbb{C}$. The constant pairs 
$(z_n, y_n) \equiv (e_i, f_i)$ satisfy 
$\langle M_{0,U}z_n, y_n\rangle = \langle f_i, f_i\rangle = 1 
= \|M_{0,U}\|$ for $i = 1, 2$, so both are norming pairs for 
$M_{0,U}$. A direct computation gives
\[
\langle Te_1, f_1\rangle = \langle f_1, f_1\rangle = 1, 
\qquad
\langle Te_2, f_2\rangle = \langle f_1, f_2\rangle 
= \langle M_{0,U}e_1, M_{0,U}e_2\rangle 
= \langle e_1, e_2\rangle = 0.
\]
Thus $\Lambda(M_{0,U}, T)$ contains both $0$ and $1$, and 
$M_{0,U}$ is not a smooth point in 
$\mathcal{B}(\mathcal{H}\oplus\mathbb{C})$ by 
Corollary~\ref{smoothness-Hilbert}. Therefore, we have proved the following proposition.

\begin{proposition}
Pure rotations in $M \left(\H \right)$ are not smooth points in $B \left(\H\oplus \mathbb{C} \right)$.
\end{proposition}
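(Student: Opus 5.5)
The plan is to show non-smoothness through Corollary~\ref{smoothness-Hilbert}. It suffices to exhibit one operator $T$ for which the set of limits $\lim_n\langle Tx_n,y_n\rangle$, taken over norming pairs of the pure rotation, has at least two elements. Write the pure rotation as $M_{0,U}=\mathrm{diag}(U,1)$; this is what the block matrix reduces to when $a=0$, since then $\alpha(0)=1$ and $P_0+Q_0=I_{\H}$. Because $U$ is unitary, $M_{0,U}$ is a unitary operator on $\H\oplus\mathbb{C}$. Hence $\|M_{0,U}\|=1$, and for every unit vector $e$ the constant pair $(x_n,y_n)\equiv(e,M_{0,U}e)$ satisfies $\langle M_{0,U}e,M_{0,U}e\rangle=1=\|M_{0,U}\|$. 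So every unit vector yields an admissible norming pair.

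Next I use the dimension of the ambient space. Since $\H\neq\{0\}$, the space $\H\oplus\mathbb{C}$ has dimension at least $2$, so orthonormal vectors $e_1,e_2$ exist. A concrete choice is $e_1=(h,0)$ with $h$ a unit vector in $\H$, and $e_2=(0,1)$. Put $f_i=M_{0,U}e_i$. Then $f_1,f_2$ are again orthonormal, because $M_{0,U}$ preserves inner products. I test against the rank-one operator $T=f_1\otimes e_1$, that is, $Tx=\langle x,e_1\rangle f_1$.

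The remaining step is a direct computation. The first pair gives $\langle Te_1,f_1\rangle=\|f_1\|^2=1$. The second gives $\langle Te_2,f_2\rangle=\langle e_2,e_1\rangle\langle f_1,f_2\rangle=0$. The set in Corollary~\ref{smoothness-Hilbert} therefore contains both $0$ and $1$, so it is not a singleton, and $M_{0,U}$ is not smooth. I expect no real obstacle. The only points needing care are that constant sequences are allowed as norming pairs in the corollary, and that the dimension is at least $2$; the latter is automatic because of the extra $\mathbb{C}$ summand.
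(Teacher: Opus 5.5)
Your proof is correct and is essentially the paper's argument: the same test operator $T=f_1\otimes e_1$, the same constant norming pairs $(e_i,f_i)$ for the unitary $M_{0,U}=\mathrm{diag}(U,1)$, and the same appeal to Corollary~\ref{smoothness-Hilbert} with the values $1$ and $0$. Your evaluation $\langle Te_2,f_2\rangle=\langle e_2,e_1\rangle\langle f_1,f_2\rangle=0$ (indeed $Te_2=0$) is more accurate than the paper's intermediate step $\langle Te_2,f_2\rangle=\langle f_1,f_2\rangle$. Your remark that $\dim(\H\oplus\mathbb{C})\geq 2$ also justifies the existence of $e_1,e_2$, which the paper takes for granted.
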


\noindent We now prove Theorem \ref{smooth: Aut}, which shows non-rotations are smooth points in $B \left(\H\oplus \mathbb{C} \right)$.

\begin{proof}[Proof of Theorem \ref{smooth: Aut}]
Consider $M_{\phi_{a,U}} \in M(\mathcal{H})$ for some nonzero 
$a \in B$ and unitary $U$ on $\mathcal{H}$. We write $M_{\phi_{a,U}} = \widetilde{U}M_{a,I},$, where $\widetilde{U} = \mathrm{diag}(U, 1) \in \mathcal{B}(\mathcal{H} \oplus \mathbb{C})$ is unitary. Since $\widetilde{U}$ is unitary, $\|M_{\phi_{a,U}}\| = \|M_{a,I}\|$, and 
$(z_n)$ is a norming sequence for $M_{\phi_{a,U}}$ if and only if it is 
a norming sequence for $M_{a,I}$. 

\medskip

\noindent It is not difficult to see that $\mathcal{H}_0 = \left\{(z, 0) \in \mathcal{H} \oplus \mathbb{C} : z \perp a\right\}$ is a subspace of co-dimension $2$ in $\mathcal{H} \oplus \mathbb{C}$, and is an eigenspace of $M_{a,I}$ corresponding the eigenvalue $\alpha(a)$. Since $M_{a,I}$ 
is self-adjoint, and $\H_0$ is an invariant subspace of $\H$, $\mathcal{H}_0$ is in fact 
a reducing subspace of $M_{a,I}$, giving the block decomposition $M_{a,I} = T\oplus S$ with respect to the orthogonal decomposition $\mathcal{H} \oplus \mathbb{C} = \mathcal{H}_0 \oplus \mathcal{H}_0^\perp$, where $T = \alpha(a)I$ 
on $\mathcal{H}_0$ and $S = M_{a,I}|_{\mathcal{H}_0^\perp}$.

\medskip

\noindent The complementary subspace $\mathcal{H}_0^\perp$ is 
a two-dimensional subspace, and spanned by the orthonormal basis 
$\left\{\frac{1}{\|a\|}(a, 0),\ (0,1)\right\}$. A direct computation shows that the matrix of $S$ with respect to this basis is given by
\[
[S] = \begin{bmatrix} 1 & -\|a\| \\ -\|a\| & 1 \end{bmatrix}.
\]
Since $[S]$ is self-adjoint, $\|S\|$ equals to the absolute value of its its largest eigenvalue, which is $1 + \|a\|$. The corresponding unit
eigenvector is 
\[
\frac{1}{\sqrt{2}}\left(\frac{1}{\|a\|}(a,0) - 
(0,1)\right) = \frac{1}{\sqrt{2}}\left(\frac{a}{\|a\|}, -1\right).
\]

\medskip

\noindent By Lemma~\ref{block matrix norm}, since 
$\|S\| = 1 + \|a\| > \alpha(a) = \|T\|$, we conclude
\[
\|M_{a,I}\| = \|S\| = 1 + \|a\|,
\]
and the norm is attained precisely on the unit sphere of the one-dimensional eigenspace:
\[
\left\{\mu\cdot\frac{1}{\sqrt{2}}
\left(\frac{a}{\|a\|}, -1\right) : |\mu| = 1\right\}.
\]
Moreover, by Lemma~\ref{block matrix norm} $(z_n)$ is a 
norming sequence for $M_{a,I}$ if and only if it is a norming 
sequence for $M_{a,I}P$, where $P$ denotes the orthogonal projection 
onto $\mathcal{H}_0^\perp$.

\medskip

Next, for any $A \in \mathcal{B}(\mathcal{H} \oplus \mathbb{C})$, consider the 
collection 
\begin{multline*}
\Lambda(M_{\phi_{a,U}}, A) = \left\{\mu \in \mathbb{C} : 
\lim_n \langle Az_n, y_n\rangle = \mu,\ 
(z_n),(y_n) \subset S_{\mathcal{H}\oplus\mathbb{C}},\right.\\
\left.\hspace{4cm} \lim_n \langle M_{\phi_{a,U}}z_n, y_n\rangle 
= \|M_{\phi_{a,U}}\|\right\}.
\end{multline*}
We show this collection is a singleton set. Let $\lambda_0 \in 
\Lambda(M_{\phi_{a,U}}, A)$, realized by the sequences $(z_n)$ and $(y_n)$. 
Since $|\langle M_{\phi_{a,U}}z_n, y_n\rangle| \leq \|M_{\phi_{a,U}}z_n\|$, the 
condition $\lim_n\langle M_{\phi_{a,U}}z_n, y_n\rangle = \|M_{\phi_{a,U}}\|$ 
forces $(z_n)$ to be a norming sequence for $M_{\phi_{a,U}}$, hence for 
$M_{a,I}$, and therefore, for $M_{a,I}P$.

\medskip

\noindent Since $\mathcal{H}_0^\perp$ is 
two-dimensional, its closed unit ball is a compact subset of $B_{\H\oplus \mathbb{C}}$. We observe that 
$\lim_n\|(I-P)z_n\| = 0$ (by the same argument as in 
Lemma~\ref{block matrix norm}), and the sequence $(Pz_n)$ lies in the closed unit ball of $\mathcal{H}_0^\perp$. We may therefore pass to a 
subsequence and assume $Pz_{n_k} \to z_0$ for some unit vector $z_0$ in $\mathcal{H}_0^\perp$. Since $(I-P)z_{n_k} \to 0$, we get 
$z_{n_k} \to z_0$, and we know from the norm attainment set of $M_{a,I}$ that the unit vector $z_0$ is uniquely determined by:
\[
z_0 = \mu_0\cdot\frac{1}{\sqrt{2}}\left(\frac{a}{\|a\|}, -1\right)
\quad\text{for some unimodular scalar } \mu_0.
\]

\noindent A parallel argument applies to $(y_n)$. Since $M_{a,I}$ 
is self-adjoint and $\lim_n\langle M_{\phi_{a,U}}z_n, y_n\rangle = 
\|M_{\phi_{a,U}}\| \in \mathbb{R}$, we have
\[
\lim_n\langle M_{\phi_{a,U}}z_n, y_n\rangle 
= \lim_n\langle M_{a,I}z_n, \widetilde{U}^*y_n\rangle 
= \lim_n\langle M_{a,I}\widetilde{U}^*y_n, z_n\rangle 
= \|M_{\phi_{a,U}}\|,
\]
which shows $(\widetilde{U}^*y_n)$ is also a norming sequence for 
$M_{a,I}$. By the same compactness argument, passing to a further 
subsequence gives $\widetilde{U}^*y_{n_k} \to y_0$ where
\[
y_0 = \sigma_0\cdot\frac{1}{\sqrt{2}}\left(\frac{a}{\|a\|}, -1\right)
\quad\text{for some unimodular scalar } \sigma_0.
\]

\noindent By a diagonal argument we extract a common subsequence 
$(n_p)$ along which both $z_{n_p} \to \mu_0\tilde{z}_0$ and 
$\widetilde{U}^*y_{n_p} \to \sigma_0\tilde{z}_0$, where $\mu_0$ and $\sigma_0$ are unimodular scalars and $\tilde{z}_0 = \frac{1}{\sqrt{2}}\left(a/\|a\|, -1\right)$. 
The condition $\lim_p\langle M_{\phi_{a,U}}z_{n_p}, y_{n_p}\rangle = 
\|M_{\phi_{a,U}}\|$ then gives
\[
\mu_0\overline{\sigma_0}(1 + \|a\|) = \|M_{\phi_{a,U}}\|,
\]
which forces $\mu_0\overline{\sigma_0} = 1$, hence $\mu_0 = \sigma_0$.

\noindent Therefore $\lambda_0$ is completely determined:
\[
\lambda_0 = \lim_p\langle Az_{n_p}, y_{n_p}\rangle 
= \left\langle A\tilde{z}_0,\ \widetilde{U}\tilde{z}_0\right\rangle
= \frac{1}{2}\left\langle A\!\left(\tfrac{a}{\|a\|}, -1\right),\ 
\widetilde{U}\!\left(\tfrac{a}{\|a\|}, -1\right)\right\rangle.
\]
Since this value depends only on $A$ and not on the choice of 
sequences, $\Lambda(M_{\phi_{a,U}}, A)$ is a singleton set for every 
$A \in \mathcal{B}(\mathcal{H} \oplus \mathbb{C})$. By 
Corollary~\ref{smoothness-Hilbert}, $M_{\phi_{a,U}}$ is a smooth point 
in $\mathcal{B}(\mathcal{H} \oplus \mathbb{C})$, completing the 
proof.
\end{proof}

\subsection{Orthogonality}\hfill\\

\noindent The same block matrix structure that identifies smooth 
points also translates orthogonality into a 
purely geometric condition on $\partial B$, which is Theorem \ref{ortho: aut}

\begin{proof}[Proof of Theorem \ref{ortho: aut}]
We first prove the necessity. Suppose $M_{\phi_{a,U}} \perp_B M_{\phi_{b,V}}$. 
By Lemma~\ref{BS-Theorem}, there exists a norming sequence $(z_n)$ 
for $M_{\phi_{a,U}}$ such that
\[
\lim_n \langle M_{\phi_{a,U}}z_n, M_{\phi_{b,V}}z_n\rangle = 0.
\]
By the same compactness argument of Theorem~\ref{smooth: Aut}, we pass 
to a subsequence $(z_{n_k})$ of $(z_n)$  and obtain
\[
z_{n_k} \to z_0 = \frac{\mu_0}{\sqrt{2}}
\left(\frac{a}{\|a\|}, -1\right)
\quad\text{for some unimodular scalar } \mu_0.
\]

\noindent The next step is to express $M_{\phi_{a,U}}z_0$ and $M_{\phi_{b,V}}z_0$ 
in terms of the M\"{o}bius maps $\phi_{a,U}$ and $\phi_{b,V}$ 
evaluated at $-a/\|a\|$. A direct computation using the 
block matrix structure gives
\begin{align*}
M_{\phi_{a,U}}\left(\frac{a}{\|a\|}, -1\right) 
&= -(1 + \|a\|)
\left(\phi_{a,U}\!\left(-\tfrac{a}{\|a\|}\right), 1\right),\\
M_{\phi_{b,V}}\left(\frac{a}{\|a\|}, -1\right) 
&= -\left(1 + \left\langle\frac{a}{\|a\|}, b\right\rangle\right)
\left(\phi_{b,V}\!\left(-\tfrac{a}{\|a\|}\right), 1\right).
\end{align*}
Note that $1 + \|a\| > 0$ and 
$1 + \langle a/\|a\|, b\rangle \neq 0$ since 
$|\langle a/\|a\|, b\rangle| \leq \|b\| < 1$.

\medskip

\noindent By continuity of the inner product, passing to the limit 
along the subsequence gives
\begin{align*}
0 &= \lim_k\langle M_{\phi_{a,U}}z_{n_k}, M_{\phi_{b,V}}z_{n_k}\rangle 
= \langle M_{\phi_{a,U}}z_0, M_{\phi_{b,V}}z_0\rangle\\
&= \frac{1}{2}(1+\|a\|)
\left(1+\left\langle\frac{a}{\|a\|},b\right\rangle\right)
\left[\left\langle\phi_{a,U}\!\left(-\tfrac{a}{\|a\|}\right),
\phi_{b,V}\!\left(-\tfrac{a}{\|a\|}\right)\right\rangle 
+ 1\right].
\end{align*}
Cancelling both the factors, we conclude
\[
\left\langle\phi_{a,U}\!\left(-\tfrac{a}{\|a\|}\right),\, 
\phi_{b,V}\!\left(-\tfrac{a}{\|a\|}\right)\right\rangle = -1.
\]
Since both images lie on $\partial B$, the equality case of the 
Cauchy--Schwarz inequality now forces
\[
\phi_{a,U}\!\left(-\tfrac{a}{\|a\|}\right) 
= -\phi_{b,V}\!\left(-\tfrac{a}{\|a\|}\right),
\]
that is, $\phi_{a,U}\!\left(-a/\|a\|\right)$ and 
$\phi_{b,V}\!\left(-a/\|a\|\right)$ are antipodal points 
on $\partial B$.

\medskip

\noindent We now prove the sufficiency. Suppose $\phi_{a,U}\!\left(-a/\|a\|\right)$ 
and $\phi_{b,V}\!\left(-a/\|a\|\right)$ are antipodal on 
$\partial B$, so that
\[
\left\langle\phi_{a,U}\!\left(-\tfrac{a}{\|a\|}\right),\, 
\phi_{b,V}\!\left(-\tfrac{a}{\|a\|}\right)\right\rangle + 1 = 0.
\]
Let $w = \frac{1}{\sqrt{2}}\left(a/\|a\|, -1\right)$, which 
is the unit vector at which $M_{\phi_{a,U}}$ attains its norm. For any 
scalar $\lambda$, the block matrix computations above give
\begin{align*}
\|(M_{\phi_{a,U}}+\lambda M_{\phi_{b,V}})w\|^2 
&= \|M_{\phi_{a,U}}w\|^2 
+ 2\,\mathrm{Re}\,\lambda\langle M_{\phi_{b,V}}w, M_{\phi_{a,U}}w\rangle 
+ |\lambda|^2\|M_{\phi_{b,V}}w\|^2.
\end{align*}
The cross term vanishes by the antipodality condition:
\begin{align*}
\langle M_{\phi_{a,U}}w, M_{\phi_{b,V}}w\rangle 
& = \frac{(1+\|a\|)\left(1+\left\langle a/\|a\|,b\right\rangle\right)}{2}
\left[\left\langle\phi_{a,U}\!\left(-\tfrac{a}{\|a\|}\right),
\phi_{b,V}\!\left(-\tfrac{a}{\|a\|}\right)\right\rangle 
+ 1\right]\\
& = 0.
\end{align*}
Therefore,
\[
\|(M_{\phi_{a,U}}+\lambda M_{\phi_{b,V}})w\|^2 
= \|M_{\phi_{a,U}}w\|^2 + |\lambda|^2\|M_{\phi_{b,V}}w\|^2 
\geq \|M_{\phi_{a,U}}w\|^2 = \|M_{\phi_{a,U}}\|^2,
\]
which shows $\|M_{\phi_{a,U}}+\lambda M_{\phi_{b,V}}\| \geq \|M_{\phi_{a,U}}\|$ for 
all scalars $\lambda$. By definition, $M_{\phi_{a,U}} \perp_B M_{\phi_{b,V}}$ 
in $\mathcal{B}(\mathcal{H}\oplus\mathbb{C})$, completing the proof.
\end{proof}

\noindent The antipodality condition admits an explicit 
formulation directly in terms of the parameters of the 
automorphisms.

\begin{corollary}\label{cor: ortho}
Let $M_{\phi_{a,U}}\perp_B M_{\phi_{b,V}}$ in $M \left(\H \right)$ if and only if 
\[
U \left(-\frac{a}{\|a\|} \right)=-V\phi_{b,I} \left( \left(-\frac{a}{\|a\|} \right) \right).
\]
\end{corollary}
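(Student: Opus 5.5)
The plan is to read this off directly from Theorem~\ref{ortho: aut}: it only remains to evaluate the two boundary images $\phi_{a,U}(-a/\|a\|)$ and $\phi_{b,V}(-a/\|a\|)$ explicitly. Throughout, $a\neq 0$ is assumed, as in Theorem~\ref{ortho: aut}, so that $\zeta:=-a/\|a\|\in\partial B$ makes sense.

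\emph{Step 1: evaluate $\phi_{a,U}$ at $\zeta$.} Since $\zeta$ lies in $\mathbb{C}a$, we have $P_a\zeta=\zeta$ and $Q_a\zeta=0$. Hence
\[
(P_a+\alpha(a)Q_a)\zeta-a=-\frac{a}{\|a\|}-a=-(1+\|a\|)\frac{a}{\|a\|}.
\]
For the denominator,
\[
1-\langle\zeta,a\rangle=1+\|a\|.
\]
The factor $1+\|a\|$ cancels, giving $\phi_{a,U}(\zeta)=U\zeta=U(-a/\|a\|)$. This is consistent with the identity $M_{\phi_{a,U}}(a/\|a\|,-1)=-(1+\|a\|)(\phi_{a,U}(\zeta),1)$ used in the proof of Theorem~\ref{ortho: aut}. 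The boundary extension of $\phi_{a,U}$ is given by the same formula, so no limiting argument is needed here.

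\emph{Step 2: factor $\phi_{b,V}$.} From the defining formula, $\phi_{b,V}=V\circ\phi_{b,I}$, since the unitary only multiplies the numerator on the left. This holds on $\overline{B}$ as well, because $1-\langle z,b\rangle\neq0$ whenever $\|z\|\le 1$ and $\|b\|<1$. Hence $\phi_{b,V}(\zeta)=V\phi_{b,I}(\zeta)$.

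\emph{Step 3: conclude.} Both images lie on $\partial B$, and antipodality means $\phi_{a,U}(\zeta)=-\phi_{b,V}(\zeta)$. By Theorem~\ref{ortho: aut}, $M_{\phi_{a,U}}\perp_B M_{\phi_{b,V}}$ holds if and only if
\[
U\left(-\frac{a}{\|a\|}\right)=-V\phi_{b,I}\left(-\frac{a}{\|a\|}\right),
\]
which is the stated identity.

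There is no real obstacle. The only point needing care is the cancellation in Step 1, namely that $\zeta$ is an eigenvector of $P_a+\alpha(a)Q_a$ with eigenvalue $1$. Beyond that, one only has to keep track of the sign conventions in the inner product $\langle\zeta,a\rangle=-\|a\|$.
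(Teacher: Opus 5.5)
Your proposal is correct and follows essentially the same route as the paper. The paper also notes that $-a/\|a\|$ is a fixed point of $\phi_{a,I}$ (so $\phi_{a,U}(-a/\|a\|)=-Ua/\|a\|$) and that $\phi_{b,V}=V\phi_{b,I}$, then reads the identity off Theorem~\ref{ortho: aut}; your explicit remarks that $a\neq 0$ is required and that the formula is valid on $\partial B$ are small but welcome additions.
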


\begin{proof}
We first observe that $-a/\|a\|$ is a fixed point of 
$\phi_{a,I}$, since
\[
\phi_{a,I}\left(-\frac{a}{\|a\|}\right) 
= \frac{-\frac{a}{\|a\|} - a}{1 + \|a\|} 
= -\frac{a}{\|a\|},
\]
and that $\phi_{b,V} = V\phi_{b,I}$. Therefore,
\begin{align*}
M_{\phi_{a,U}} \perp_B M_{\phi_{b,V}} 
&\iff \phi_{a,U}\!\left(-\tfrac{a}{\|a\|}\right) 
= -\phi_{b,V}\!\left(-\tfrac{a}{\|a\|}\right)\\
&\iff -U\tfrac{a}{\|a\|} 
= -V\phi_{b,I}\!\left(-\tfrac{a}{\|a\|}\right)\\
&\iff U\!\left(-\tfrac{a}{\|a\|}\right) 
= -V\phi_{b,I}\!\left(-\tfrac{a}{\|a\|}\right),
\end{align*}
and the proof is complete.
\end{proof}

\begin{corollary}
For any unitary operator $U$ on $\mathcal{H}$ and any 
$a \in B \setminus \left\{0\right\}$, $M_{\phi_{a,U}} \perp_B M_{a,-U}$ in 
$M(\mathcal{H})$.
\end{corollary}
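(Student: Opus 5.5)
The statement is read with $M_{a,-U}$ meaning $M_{\phi_{a,-U}}$, the block matrix of the automorphism $\phi_{a,-U}=-\phi_{a,U}$. The plan is to apply Theorem~\ref{ortho: aut} with $b=a$ and $V=-U$. Since $a\neq 0$, it suffices to show that $\phi_{a,U}(-a/\|a\|)$ and $\phi_{a,-U}(-a/\|a\|)$ are antipodal points of $\partial B$.

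First we evaluate $\phi_{a,U}$ at the boundary point. By the computation in the proof of Corollary~\ref{cor: ortho}, $-a/\|a\|$ is a fixed point of $\phi_{a,I}$. Writing $\phi_{a,U}=U\phi_{a,I}$, this gives
\[
\phi_{a,U}\!\left(-\tfrac{a}{\|a\|}\right) = -U\tfrac{a}{\|a\|}.
\]
The right-hand side has norm one because $U$ is unitary, so this point lies on $\partial B$.

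Next we use the definition of the M\"obius map. The formula for $\phi_{a,U}$ depends linearly on $U$, so replacing $U$ by $-U$ gives
\[
\phi_{a,-U}\!\left(-\tfrac{a}{\|a\|}\right) = U\tfrac{a}{\|a\|} = -\phi_{a,U}\!\left(-\tfrac{a}{\|a\|}\right).
\]
The two image points are therefore antipodal on $\partial B$. The sufficiency half of Theorem~\ref{ortho: aut} then yields $M_{\phi_{a,U}}\perp_B M_{\phi_{a,-U}}$.

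Alternatively, one can check the criterion of Corollary~\ref{cor: ortho} directly with $b=a$ and $V=-U$. Its right-hand side is $-(-U)\phi_{a,I}(-a/\|a\|)=U(-a/\|a\|)$, which equals the left-hand side.

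There is no real obstacle in this argument. The only points needing care are reading $M_{a,-U}$ as $M_{\phi_{a,-U}}$, and confirming that $-U$ is again unitary, so that $\phi_{a,-U}$ is a genuine element of $\Aut(B)$ and $M_{\phi_{a,-U}}\in M(\H)$.
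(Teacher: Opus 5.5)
Your proposal is correct and follows essentially the same route as the paper: you use that $-a/\|a\|$ is a fixed point of $\phi_{a,I}$ to get $\phi_{a,U}(-a/\|a\|)=-Ua/\|a\|=-\phi_{a,-U}(-a/\|a\|)$, and then invoke the sufficiency direction of Theorem~\ref{ortho: aut}. Your alternative check via Corollary~\ref{cor: ortho} is the same antipodality condition written in terms of the parameters, so it adds nothing new but is also valid.
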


\begin{proof}
Since $-a/\|a\|$ is a fixed point of $\phi_{a,I}$, we have
\[
\phi_{a,U}\!\left(-\tfrac{a}{\|a\|}\right) = -U\tfrac{a}{\|a\|} 
= -\phi_{a,-U}\!\left(-\tfrac{a}{\|a\|}\right),
\]
so the two images are antipodal on $\partial B$. The result follows 
from Theorem~\ref{ortho: aut}.
\end{proof}

\section{Symmetry and preservation of orthogonality in $M(\H)$}\label{section:preservation}

\noindent We now turn to the symmetry properties of 
Birkhoff--James orthogonality in $M(\mathcal{H})$. The key tool in both proofs is the following lemma, which produces an explicit unitary operator that maps one boundary point to another boundary point through Householder reflector.

\begin{lemma}\label{lem:householder}
Let $y, w \in \partial B$. Then there exists a unitary operator 
$V$ on $\mathcal{H}$ such that $Vw = y$.
\end{lemma}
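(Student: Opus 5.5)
The plan is to construct $V$ explicitly as a Householder-type reflection in the complex Hilbert space $\mathcal{H}$, taking care of the phase issue that arises over $\mathbb{C}$. Given unit vectors $y,w\in\partial B$, we first dispose of the trivial case $y=w$, where $V=I_{\mathcal{H}}$ works.

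In the remaining case we normalize the phase. Choose a unimodular scalar $\theta$ with $\langle w,\theta y\rangle\in\mathbb{R}$; for instance, take $\theta=\langle w,y\rangle/|\langle w,y\rangle|$ when $\langle w,y\rangle\neq 0$, and $\theta=1$ otherwise. Set $y'=\theta y$, so that $\langle w,y'\rangle$ is real and $\|y'\|=1$.

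If $y'=w$, then $y=\overline{\theta}w$, and $V=\overline{\theta}I_{\mathcal{H}}$ is unitary with $Vw=y$. Otherwise put $u=(w-y')/\|w-y'\|$ and define the reflection
\[
R=I_{\mathcal{H}}-2\,u\otimes u,\qquad Rx=x-2\langle x,u\rangle u .
\]
This $R$ is self-adjoint, and $R^2=I_{\mathcal{H}}$ because $\|u\|=1$; hence $R$ is unitary. We then compute
\[
Rw=w-\frac{2\langle w,w-y'\rangle}{\|w-y'\|^2}(w-y').
\]
Since $\langle w,y'\rangle$ is real, we have $\|w-y'\|^2=2-2\langle w,y'\rangle=2\langle w,w-y'\rangle$. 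The coefficient therefore equals $1$, and $Rw=y'$. Finally we set $V=\overline{\theta}R$, which is unitary as a unimodular multiple of a unitary, and $Vw=\overline{\theta}y'=y$.

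The only real obstacle is the complex phase. Without first rotating $y$ so that $\langle w,y\rangle$ is real, the identity $\|w-y\|^2=2\langle w,w-y\rangle$ fails, and the plain Householder map does not send $w$ to $y$. The scalar $\theta$ handles exactly this, and the separate case $y'=w$ avoids division by zero.

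An alternative is to define $V$ on the two-dimensional span of $\{w,y\}$ by a $2\times 2$ unitary matrix and extend it by the identity on the orthogonal complement. The explicit reflection is cleaner and fits the lemma's own mention of a Householder reflector.
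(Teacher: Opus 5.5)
Your proof is correct and is essentially the paper's argument: multiply $y$ by the phase $\theta=\mathrm{sgn}\langle w,y\rangle$ so that $\langle w,\theta y\rangle$ is real, reflect with $I-2u\otimes u$ where $u$ is $w-\theta y$ normalized, and then multiply by $\overline{\theta}$. Your separate treatment of the case $\theta y=w$ (for example $y=-w$, where the paper's $z_0$ would require division by zero) and your explicit choice $\theta=1$ when $\langle w,y\rangle=0$ close small gaps in the paper's version, and they also make its separate one-dimensional case unnecessary.
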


\begin{proof}
The proof is trivial in dimension $1$, since $z\mapsto y\overline{w}z$ is the required unitary. If $dim~H\geq 2$, we consider the Householder reflector: if $y = w$, take $V = I$. Otherwise, set 
$\mu_0 = \mathrm{sgn}\langle w, y\rangle$ and 
$z_0 = \frac{w - \mu_0 y}{\|w - \mu_0 y\|}$, and define
\[
H = I - 2(z_0\otimes z_0).
\]
Since $\langle w, \mu_0 y\rangle = \overline{\mathrm{sgn}
\langle w,y\rangle}\langle w,y\rangle$ is a positive real 
number, we have $\|w - \mu_0 y\|^2 = 2(1 - \langle w, 
\mu_0 y\rangle)$, and a direct computation gives
\[
H(w) = w - 2\frac{1 - \langle w, \mu_0 y\rangle}
{2(1 - \langle w, \mu_0 y\rangle)}(w - \mu_0 y) = \mu_0 y.
\]
The operator $H$ is self-adjoint and unitary by definition, 
and therefore $V = \overline{\mu_0}H$ satisfies 
$Vw = \overline{\mu_0}\mu_0 y = y$.
\end{proof}

\noindent We next prove Theorem \ref{thm:left} which presents a characterization left-symmetric and right-symmetric points of $M(\H)$.

\begin{proof}[Proof of Theorem \ref{thm:left}]
We divide the proof into two parts.

\medskip

\noindent {\bf Part-I: No left-symmetric points}

\medskip

\noindent \textit{Case I: Non-rotations are not left-symmetric.}

\medskip

\noindent Let $M_{\phi_{a,U}} \in M(\mathcal{H})$ 
for some nonzero $a \in B$. We construct 
$M_{\phi_{b,V}} \in M(\mathcal{H})$ such that $M_{\phi_{a,U}} \perp_B M_{\phi_{b,V}}$ 
but $M_{\phi_{b,V}} \not\perp_B M_{\phi_{a,U}}$, showing $M_{\phi_{a,U}}$ is not 
left-symmetric.

\medskip

\noindent We choose $b \in B$ with the following specification:
\[
Im~(\langle a, b\rangle^2)\neq 0, \quad \|a\|\neq \|b\|.
\]
Existence of such an element $b$ is always ensured in any complex Hilbert space. By Corollary~\ref{cor: ortho}, $M_{\phi_{a,U}} \perp_B M_{\phi_{b,V}}$ 
if and only if $V\!\left(-\phi_{b,I}\!\left(-a/\|a\|\right)
\right) = U\!\left(-a/\|a\|\right)$.

\medskip

\noindent Next, we specify $V$. Setting 
$y = U\!\left(-a/\|a\|\right)$ and 
$w = -\phi_{b,I}\!\left(-a/\|a\|\right)$, both of which 
lie on $\partial B$, Lemma~\ref{lem:householder} produces a 
unitary $V$ satisfying $Vw = y$, so $M_{\phi_{a,U}} \perp_B M_{\phi_{b,V}}$ 
by construction.

\medskip

\noindent It remains to show $M_{\phi_{b,V}} \not\perp_B M_{\phi_{a,U}}$. Suppose for contradiction that $M_{\phi_{b,V}} \perp_B M_{\phi_{a,U}}$. By Corollary~\ref{cor: ortho}, $M_{\phi_{b,V}} \perp_B M_{\phi_{a,U}}$ would 
require $V\!\left(-b/\|b\|\right) = 
-U\phi_{a,I}\!\left(-b/\|b\|\right)$. 
Since $V$ and $U$ are unitary operators, using 
\[
V\!\left(-\phi_{b,I}\!\left(-\frac{a}{\|a\|}\right)\right) 
= U\!\left(-\frac{a}{\|a\|}\right), \quad 
V\!\left(-\frac{b}{\|b\|}\right) 
= -U\phi_{a,I}\!\left(-\frac{b}{\|b\|}\right),
\]
we get
\begin{align*}
\left\langle 
\phi_{b,I}\!\left(-\tfrac{a}{\|a\|}\right),\, 
\tfrac{b}{\|b\|}
\right\rangle 
&= \left\langle 
V\!\left(-\phi_{b,I}\!\left(-\tfrac{a}{\|a\|}\right)\right),\, 
V\!\left(-\tfrac{b}{\|b\|}\right)
\right\rangle \\
&= \left\langle 
U(-\tfrac{a}{\|a\|}),\, 
-U\phi_{a,I}\!\left(-\tfrac{b}{\|b\|}\right)
\right\rangle.\\
&= \left\langle 
\tfrac{a}{\|a\|},\, 
\phi_{a,I}\!\left(-\tfrac{b}{\|b\|}\right)
\right\rangle.
\end{align*}
Using 
$P_b(a/\|a\|) = \frac{\langle a,b\rangle}{\|a\|\|b\|^2}b$ 
and $Q_b(a/\|a\|) \perp b$, the left-hand side gives
\[
\left\langle 
\phi_{b,I}\!\left(-\tfrac{a}{\|a\|}\right),\, 
\tfrac{b}{\|b\|}
\right\rangle 
= \frac{-\dfrac{\langle a,b\rangle}{\|a\|\|b\|} - \|b\|}
{1 + \dfrac{\langle a,b\rangle}{\|a\|}}.
\]
with the role of $a$ and $b$ interchanged, the right-hand side becomes
\[
\left\langle 
\tfrac{a}{\|a\|},\, 
\phi_{a,I}\!\left(-\tfrac{b}{\|b\|}\right)
\right\rangle 
= \frac{-\dfrac{\langle a,b\rangle}{\|a\|\|b\|} - \|a\|}
{1 + \dfrac{\langle a,b\rangle}{\|b\|}}.
\]
Equating the two sides, we get 
\[
-\frac{\langle a,b\rangle^2}{\|a\|\|b\|^2} - \|b\| 
= -\frac{\langle a,b\rangle^2}{\|a\|^2\|b\|} - \|a\|,
\]
which rearranges to
\[
\frac{\langle a,b\rangle^2\bigl(\|b\| - \|a\|\bigr)}
{\|a\|^2\|b\|^2} = \|b\| - \|a\|.
\]
Since $\|a\| \neq \|b\|$, we may divide both sides by 
$\|b\| - \|a\|$ to obtain 
$\langle a,b\rangle^2 = \|a\|^2\|b\|^2$, contradicting the choice of $b$ with 
$\mathrm{Im}(\langle a,b\rangle)^2 \neq 0$. Therefore $M_{\phi_{a,U}}$ is not left-symmetric.

\medskip

\noindent \textit{Case II: Pure rotations are not left-symmetric.}

\medskip

\noindent If $dim ~\H=1$, i.e., $\H=\mathbb{C}$, consider a pure rotation $M_{0,e^{i\theta}}$ and let 
$b \in B \cap \mathbb{R}$ with $b \neq 0$. Set 
$e^{i\phi} = -e^{i\theta}\cdot\frac{1+ib}{1-ib}$, and 
note that $e^{i\phi} \neq -e^{i\theta}$ since 
$b \neq 0$. Taking the unit vector 
$z = \frac{1}{\sqrt{2}}(1, i) \in \mathbb{C}^2$:
\begin{align*}
\left\langle M_{0,e^{i\theta}}^*M_{b,e^{i\phi}}z,\, 
z\right\rangle
&= \frac{1}{2}\!\left[e^{i(\phi-\theta)}(1-ib) 
+ (1+ib)\right]\\
& = \frac{1}{2}\!\left[-\frac{1+ib}{1-ib}(1-ib) 
+ (1+ib)\right]\\
& = \frac{1}{2}\!\left[-(1+ib) + (1+ib)\right]\\
& = 0.
\end{align*}
Since $M_{0,e^{i\theta}}$ is unitary, $z$ is a norming 
vector for $M_{0,e^{i\theta}}$. Thus, $M_{0,e^{i\theta}} \perp_B M_{b,e^{i\phi}}$ by 
Lemma~\ref{BS-Theorem}. By Theorem~\ref{ortho: aut}, 
$M_{b,e^{i\phi}} \perp_B M_{0,e^{i\theta}}$ requires 
antipodality of $\phi_{b,e^{i\phi}}(-b/|b|) 
= -e^{i\phi}b/|b|$ and 
$\phi_{0,e^{i\theta}}(-b/|b|) = -e^{i\theta}b/|b|$, 
which gives $e^{i\phi} = -e^{i\theta}$. However, this fails, since
$e^{i\phi} = -e^{i\theta}\cdot\frac{1+ib}{1-ib} 
\neq -e^{i\theta}$ for $b \neq 0$. 
Therefore, $M_{0,e^{i\theta}}$ is not left-symmetric.

\bigskip

\noindent If $dim ~ \H \geq 2$, consider a pure rotation $M_{0,U}$, and let $a \in B$ be any nonzero element. Set 
$u = U\!\left(a/\|a\|\right)$ and choose $w \in 
\partial B$ with $w \perp u$. By Lemma~\ref{lem:householder}, 
there exists a unitary $V$ with $V\!\left(a/\|a\|\right) 
= w$. Since $M_{0,U}$ is unitary, every unit vector is a norming 
vector, and since
\[
\left\langle M_{0,U}\!\left(\tfrac{a}{\|a\|},0\right), 
M_{a,V}\!\left(\tfrac{a}{\|a\|},0\right)\right\rangle 
= \left\langle u, w\right\rangle = 0,
\]
we conclude $M_{0,U} \perp_B M_{a,V}$. However, by 
Corollary~\ref{cor: ortho},
\[
-U\!\left(\phi_{0,I}\!\left(-\tfrac{a}{\|a\|}\right)\right) 
= U\!\left(\tfrac{a}{\|a\|}\right) = u \neq -w 
= V\!\left(-\tfrac{a}{\|a\|}\right),
\]
since $\langle u, w\rangle = 0$.
so $M_{a,V} \not\perp_B M_{0,U}$. Therefore pure rotations 
are not left-symmetric points.

\medskip

\noindent\textbf{Part 2: Right-symmetric points are 
precisely pure rotations.}

\medskip

\noindent\textit{Case I: Pure rotations are right-symmetric.}

\medskip

\noindent Let $M_{0,U}$ be a pure rotation and suppose 
$M_{a,V} \perp_B M_{0,U}$ for some 
$M_{a,V} \in M(\mathcal{H})$. If $a = 0$, both 
$M_{0,V}$ and $M_{0,U}$ are unitary and the argument 
is symmetric by Lemma~\ref{BS-Theorem}. If $a \neq 0$, By 
Theorem~\ref{ortho: aut}, $\phi_{a,V}\!\left(-a/\|a\|
\right)$ and $U\!\left(-a/\|a\|\right)$ are antipodal, 
so
\[
\left\langle M_{a,V}\!\left(-\tfrac{a}{\|a\|},1\right), 
M_{0,U}\!\left(-\tfrac{a}{\|a\|},1\right)\right\rangle = 0,
\]
so the constant norming sequence 
$x_n \equiv \frac{1}{\sqrt{2}}\!\left(-a/\|a\|, 1\right)$ 
satisfies 
$\langle M_{0,U}x_n, M_{a,V}x_n\rangle 
= \frac{1}{2}\langle M_{a,V}(-a/\|a\|,1), 
M_{0,U}(-a/\|a\|,1)\rangle = 0$.
Since $M_{0,U}$ is unitary, it attains its norm at every unit 
vector, and by Lemma~\ref{BS-Theorem} 
immediately gives $M_{0,U} \perp_B M_{a,V}$. This shows every 
pure rotation is right-symmetric.

\medskip

\noindent\textit{Case II: Non-rotations are not right-symmetric.}

\medskip

\noindent If $dim ~ \H=1$, i.e., $\H = \mathbb{C}$, consider $M_{a,e^{i\theta}} \in M(\mathbb{C})$ with 
$a \neq 0$. Choose $b \in B$ with $b \neq 0$, 
$|b| \neq |a|$, and $b/|b| \neq \pm\, a/|a|$. 
Since $\phi_{a,e^{i\theta}}(-b/|b|)$ is unimodular 
and $-b/|b|$ is a fixed point of $\phi_{b,I}$, 
define $e^{i\phi}$ so that 
\[
e^{i\phi}\frac{b}{|b|} 
= e^{i\theta}\phi_{a,I}\!\left(-\frac{b}{|b|}\right).
\tag{I}
\]
which satisfies the antipodality condition, and By Theorem~\ref{ortho: aut}, we have 
$M_{b,e^{i\phi}} \perp_B M_{a,e^{i\theta}}$.

\medskip

\noindent We claim $M_{a,e^{i\theta}} \not\perp_B 
M_{b,e^{i\phi}}$. Suppose for contradiction that 
$M_{a,e^{i\theta}} \perp_B M_{b,e^{i\phi}}$. By 
Theorem~\ref{ortho: aut}, together with the fixed 
point identity 
$\phi_{a,e^{i\theta}}(-a/|a|) = -e^{i\theta}a/|a|$, 
this gives:
\[
e^{i\theta}\frac{a}{|a|} 
= e^{i\phi}\phi_{b,I}\!\left(-\frac{a}{|a|}\right).
\tag{II}
\]
Multiplying $\mathrm{(I)}$ and $\mathrm{(II)}$ and 
cancelling $e^{i(\theta+\phi)} \neq 0$, we get
\[
\frac{a}{|a|}\cdot\frac{b}{|b|} 
= \phi_{b,I}\!\left(-\frac{a}{|a|}\right)\cdot
\phi_{a,I}\!\left(-\frac{b}{|b|}\right). \tag{III}
\]
Using the direct computations:
\[
\phi_{b,I}\!\left(-\frac{a}{|a|}\right) 
= \frac{-\frac{b}{|b|}\!\left(\frac{a}{|a|}
+|b|\frac{b}{|b|}\right)}{\frac{b}{|b|}
+|b|\frac{a}{|a|}}, \quad
\phi_{a,I}\!\left(-\frac{b}{|b|}\right) 
= \frac{-\frac{a}{|a|}\!\left(\frac{b}{|b|}
+|a|\frac{a}{|a|}\right)}{\frac{a}{|a|}
+|a|\frac{b}{|b|}},
\]
equation $\mathrm{(III)}$ becomes:
\[
\frac{\!\left(\frac{a}{|a|}+|b|\frac{b}{|b|}\right)
\!\left(\frac{b}{|b|}+|a|\frac{a}{|a|}\right)}
{\!\left(\frac{b}{|b|}+|b|\frac{a}{|a|}\right)
\!\left(\frac{a}{|a|}+|a|\frac{b}{|b|}\right)} = 1.
\]
Expanding both sides and cancelling the common terms 
$\frac{a}{|a|}\cdot\frac{b}{|b|}$ and 
$|a||b|\cdot\frac{a}{|a|}\cdot\frac{b}{|b|}$:
\[
(|a|-|b|)\!\left[\left(\frac{a}{|a|}\right)^2 
- \left(\frac{b}{|b|}\right)^2\right] = 0,
\]
which contradicts $|a| \neq |b|$ and 
$a/|a| \neq \pm\, b/|b|$. Therefore 
$M_{a,e^{i\theta}} \not\perp_B M_{b,e^{i\phi}}$, 
and $M_{a,e^{i\theta}}$ is not right-symmetric.

\medskip

\noindent If $dim ~\H \geq 2$, consider $M_{\phi_{a,U}} \in M(\mathcal{H})$ with $a \neq 0$. 
Set $u = U\!\left(a/\|a\|\right)$ and choose a unit 
vector $w \perp u$. By Lemma~\ref{lem:householder}, there exists 
a unitary $V$ with $V\!\left(a/\|a\|\right) = w$. Since
\[
\left\langle M_{0,V}\!\left(\tfrac{a}{\|a\|},0\right), 
M_{\phi_{a,U}}\!\left(\tfrac{a}{\|a\|},0\right)\right\rangle 
= \left\langle (w,0),\, (u, -\|a\|)\right\rangle 
= \langle w, u\rangle = 0,
\]
and $M_{0,V}$ is unitary, we get $M_{0,V} \perp_B M_{\phi_{a,U}}$. 
However, by Corollary~\ref{cor: ortho},
\[
U\!\left(-\tfrac{a}{\|a\|}\right) = -u \neq w 
=- V\!\left(\phi_{0,I}\!\left(-\tfrac{a}{\|a\|}\right)\right),
\]
since $u \perp w$. Thus, $M_{\phi_{a,U}} \not\perp_B M_{0,V}$, showing $M_{\phi_{a,U}}$ is not 
right-symmetric.

\medskip

\noindent This completes the proof.
\end{proof}

As an immediate corollary, we get the following.

\begin{corollary}
$M(\H)$ has no symmetric points.
\end{corollary}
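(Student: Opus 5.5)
This follows directly from Theorem~\ref{thm:left}. By the definition given in the introduction, a point of $M(\H)$ is symmetric exactly when it is both left-symmetric and right-symmetric in $M(\H)$. So it suffices to show that no element of $M(\H)$ is left-symmetric, and no further computation is needed.

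Part I of the proof of Theorem~\ref{thm:left} treats every element of $M(\H)$, split into two exhaustive cases. For a non-rotation $M_{\phi_{a,U}}$ with $a\neq 0$, it constructs $M_{\phi_{b,V}}$ (using Lemma~\ref{lem:householder} and Corollary~\ref{cor: ortho}) with $M_{\phi_{a,U}}\perp_B M_{\phi_{b,V}}$ but not conversely. For a pure rotation $M_{0,U}$, it does the same separately in dimension one and in dimension at least two. Hence no point of $M(\H)$ is left-symmetric, and therefore no point is symmetric.

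The only thing to check is that these cases cover all of $M(\H)$. Every element has the form $M_{\phi_{a,U}}$ with $a\in B$, so either $a=0$ (a pure rotation) or $a\neq 0$. Within the pure-rotation case, either $\dim\H=1$ or $\dim\H\geq 2$. So there is no real obstacle.

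As a consistency check: the pure rotations, which are the only right-symmetric points, still fail to be symmetric because Case II of Part I shows they are not left-symmetric.
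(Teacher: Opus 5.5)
Your proposal is correct and matches the paper's argument exactly: the paper states the corollary as an immediate consequence of Theorem~\ref{thm:left}, since no point of $M(\H)$ is left-symmetric and a symmetric point must be both left- and right-symmetric. Your check that the cases in Part I ($a\neq 0$; $a=0$ with $\dim\H=1$ or $\dim\H\geq 2$) cover all of $M(\H)$ is a harmless addition; citing the theorem is already enough.
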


\noindent As noted in the introduction, orthogonality relation in $M(\H)$ naturally induces a orthogonality relation $\perp_{Aut}$. We now characterize the inner automorphisms that preserves $\perp_{Aut}$.

\begin{proposition}
Let $\Phi:Aut(B)\to Aut(B)$ be a bijective map that preserves $\perp_{Aut}$ in both directions. Then $\Phi$ preserves pure rotations.
\end{proposition}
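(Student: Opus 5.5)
The plan is to characterize pure rotations purely in terms of the relation $\perp_{Aut}$, so that any bijection preserving $\perp_{Aut}$ in both directions must map pure rotations to pure rotations. Theorem~\ref{thm:left} already gives such a characterization: within $M(\H)$, the right-symmetric points are exactly the pure rotations. Right-symmetry of $M_{\phi_{a,U}}$ means that for every $\phi_{b,V}$, $\phi_{b,V}\perp_{Aut}\phi_{a,U}$ implies $\phi_{a,U}\perp_{Aut}\phi_{b,V}$. This is a statement only about the relation $\perp_{Aut}$ on the set $Aut(B)$, so it should be invariant under the given $\Phi$.

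The key step is to show that $\Phi$ maps right-symmetric elements to right-symmetric elements. Let $\phi_{0,U}$ be a pure rotation and set $\psi=\Phi(\phi_{0,U})$. Take an arbitrary $\chi\in Aut(B)$ with $\chi\perp_{Aut}\psi$. Since $\Phi$ is bijective, write $\chi=\Phi(\eta)$. Then $\Phi(\eta)\perp_{Aut}\Phi(\phi_{0,U})$, and preservation in the backward direction gives $\eta\perp_{Aut}\phi_{0,U}$. By right-symmetry of pure rotations (Part 2, Case I of Theorem~\ref{thm:left}), $\phi_{0,U}\perp_{Aut}\eta$. Preservation in the forward direction then gives $\psi\perp_{Aut}\chi$. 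Hence $M_\psi$ is right-symmetric in $M(\H)$, and by Theorem~\ref{thm:left} (Part 2, Case II: non-rotations are not right-symmetric), $\psi$ is a pure rotation.

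Applying the same argument to $\Phi^{-1}$, which also preserves $\perp_{Aut}$ in both directions, gives the converse: $\Phi$ maps the set of pure rotations onto itself. This is a stronger conclusion than the statement requires, but it costs nothing.

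The only delicate point is bookkeeping. One must make sure that the notions of right-symmetry used in Theorem~\ref{thm:left} are computed within $M(\H)$, i.e.\ tested only against elements of $M(\H)$, which matches quantifying over all of $Aut(B)$. One must also make sure that surjectivity of $\Phi$ is what lets us test against every $\chi$. No new computation should be needed beyond citing Theorem~\ref{thm:left}.
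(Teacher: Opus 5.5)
Your proof is correct and follows the same route as the paper. Both arguments transport the right-symmetry of the pure rotation $\phi_{0,U}$ through $\Phi$, then invoke Theorem~\ref{thm:left} to force $\Phi(\phi_{0,U})$ to be a pure rotation. Your version is the more careful one: you use surjectivity and both directions of preservation to get exactly the one-way implication needed. The paper instead asserts a biconditional $\phi_{c,W}\perp_{Aut}\Phi(\phi_{0,U})\iff\Phi(\phi_{0,U})\perp_{Aut}\phi_{c,W}$, whose reverse direction would make $\Phi(\phi_{0,U})$ left-symmetric; Theorem~\ref{thm:left} rules that out, and the argument never needs it.
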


\begin{proof}
Let $\phi_{0,U}$ be a pure rotation for some unitary operator $U$ on $\H$. Let $\Phi(\phi_{0,U})=\phi_{b,V}$ for some $b\in B$ and unitary operator $V$ on $\H$. We observe that for any $\phi_{c,W}\in Aut(B)$,  
\[
\phi_{c,W}\perp_{Aut} \Phi(\phi_{0,U}) \iff \Phi(\phi_{0,U})\perp_{Aut} \phi_{c,W}
\]
since $M_{\phi_{0,U}}$ is a right-symmetric point in $M(\H)$. Consequently, $\phi_{b,V}$ is a right-symmetric point in $M(\H)$, and by Theorem \ref{thm:left} we have $b=0$. Therefore, $\Phi$ preserves pure rotation.
\end{proof}

\begin{proof}[Proof of Theorem~\ref{thm: preserve}]
An inner automorphism of $\Aut(B)$ has the form $\Phi_\psi:\phi\mapsto\psi\circ\phi\circ\psi^{-1}$ for some $\psi\in\Aut(B)$.
 
\medskip
 
\noindent We first prove the necessity. Suppose $\Phi_\psi$ preserves $\perp_{\Aut}$ in both directions. Then by the preceding proposition, $\psi$ preserves pure rotations, meaning $\psi\circ\phi_{0,U}\circ\psi^{-1}$ is a pure rotation for every unitary operator $U$ on $\H$. We show $\psi$ must be a pure rotation. Write $\psi = \phi_{c,W}$ for some $c\in B$ and unitary $W$. Since $\phi_{c,W}^{-1} = \phi_{-c,I}\circ W^{-1}$, we compute:
\[
(\phi_{c,W}\circ\phi_{0,U}\circ\phi_{c,W}^{-1})(0)
= \phi_{c,W}(\phi_{0,U}(c))
= \phi_{c,W}(Uc).
\]
For this to be a pure rotation, its value at $0$ must be $0$. Since $\phi_{c,W}(z) = 0$ if and only $z = c$, the condition $\phi_{c,W}(Uc) = 0$ requires $Uc = c$ for every unitary $U$. But $Uc = c$ for all unitaries $U$ forces $c = 0$. Consequently, $\psi = \phi_{0,W}$ is a pure rotation.
 
\medskip
 
\noindent We now prove the sufficiency. Suppose $\psi = \phi_{0,W}$ is a pure rotation. For any $\phi_{a,U}\in\Aut(B)$, conjugation by $W$ gives:
\[
\phi_{0,W}\circ\phi_{a,U}\circ\phi_{0,W}^{-1} = \phi_{Wa,\,WUW^{-1}}.
\]

\medskip
 
\noindent By Theorem~\ref{ortho: aut}, $M_{\phi_{a,U}}\perp_B M_{\phi_{b,V}}$ if and only if $\phi_{a,U}(-a/\|a\|)$ and $\phi_{b,V}(-a/\|a\|)$ are antipodal. Therefore, to show $\Phi_\psi$ preserves $\perp_{Aut}$, we need to prove that 
\[
\phi_{Wa,WUW^{-1}}(-Wa/\|a\|)=-\phi_{Wb,WVW^{-1}}(-Wa/\|a\|).
\]
Using the identity 
\[
\phi_{Wc,WRW^{-1}}(Wz) = W\phi_{c,R}(z), \quad z\in B,
\]
for any $c\in B$, and unitary operator $R$ on $\H$, and $z\in B$, we get
\[
\phi_{Wa,WUW^{-1}}\!\left(-\tfrac{Wa}{\|a\|}\right) = W\phi_{a,U}\!\left(-\tfrac{a}{\|a\|}\right),
\quad
\phi_{Wb,WVW^{-1}}\!\left(-\tfrac{Wa}{\|a\|}\right) = W\phi_{b,V}\!\left(-\tfrac{a}{\|a\|}\right).
\]
Since $W$ is unitary, it preserves antipodality. Since the antipodality condition is equivalent in both directions, similar argument shows $\Phi_\psi^{-1}$ preserves $\perp_{Aut}$. Thus, $\Phi_\psi$ preserves $\perp_{Aut}$ in both directions, and the proof is now complete.
\end{proof}

\noindent Theorem \ref{thm: preserve} remains open in context of general bijections that preserves $\perp_{Aut}$ in both directions.

\begin{remark}
The symmetry of Birkhoff--James orthogonality has been extensively 
studied in the linear setting \cite{Mal-Paul-Sain-book, Sain-2017, Sain-normattain}. In $\mathcal{B}(\mathcal{H})$, it is known that the unit ball of $\B(\H)$ has no left-symmetric points, while the only right-symmetric points are isometries and co-isometries \cite{Turnsek, Komuro-Saito-Tanaka}. The set $M(\mathcal{H})$, 
by contrast, is a highly non-linear subset of $\mathcal{B}(\mathcal{H}\oplus\mathbb{C})$. Nevertheless, the symmetry pattern that emerges 
is almost parallel: $M(\mathcal{H})$ admits no left-symmetric 
points, and its right-symmetric points are precisely the pure rotations, 
which are the unitary elements of $M(\mathcal{H})$. This suggests a certain rigidity in the orthogonality structure that persists beyond the linear framework, and it would be interesting to investigate whether similar phenomena appear in other non-linear 
subsets of operator algebras arising naturally from complex function 
theory.
\end{remark}

\section{Hyperbolic metric induced by operator norm}\label{section:Hyperbolic metric}

\noindent We now show that the normalized $J$-unitary block matrices
recovers the hyperbolic geometry of the unit ball. We first record the norm formula, from the proof of Theorem~\ref{smooth: Aut},
\begin{equation}\label{Eqn:norm}
\|\widetilde{M}_{\phi_{a,U}}\| = \frac{1+\|a\|}{\alpha(a)} = \sqrt{\frac{1+\|a\|}{1-\|a\|}}=\sqrt{\frac{1+\|\phi_{a,U}(0)\|}{1-\|\phi_{a,U}(0)\|}}
\end{equation}

\noindent We remark a Schwarz-Pick type observation for normalized block matrices associated to the automorphism group.
\begin{remark}
Let $f\colon B\to B$ be a holomorphic self-map with $f(0)=0$. By the
Schwarz--Pick lemma for the Hilbert ball \cite{Goebel-Reich}, $\|f(a)\|
\leq\|a\|$ for all $a\in B$. Since $t\mapsto\sqrt{(1+t)/(1-t)}$ is
strictly increasing on $[0,1)$, this gives
\[
\|\widetilde{M}_{\phi_{f(a),I}}\|
= \sqrt{\frac{1+\|f(a)\|}{1-\|f(a)\|}}
\leq \sqrt{\frac{1+\|a\|}{1-\|a\|}}
= \|\widetilde{M}_{\phi_{a,I}}\|,
\]
so the operator norm of $\widetilde{M}_{\phi_{a,I}}$ decreases under
holomorphic self-maps of $B$ fixing the origin. Moreover, if equality
$\|\widetilde{M}_{\phi_{f(a_0),I}}\| = \|\widetilde{M}_{\phi_{a_0,I}}\|$
holds for some nonzero $a_0\in B$, then $\|f(a_0)\|=\|a_0\|$, and
Theorem~III.2.3 of \cite{FV} gives $\|f(\zeta a_0)\|=|\zeta|\|a_0\|$
for all $|\zeta|<1/\|a_0\|$, hence
\[
\|\widetilde{M}_{\phi_{f(\zeta a_0),I}}\|
= \sqrt{\frac{1+|\zeta|\|a_0\|}{1-|\zeta|\|a_0\|}}
= \|\widetilde{M}_{\phi_{\zeta a_0,I}}\|
\quad\text{for all } |\zeta| < \frac{1}{\|a_0\|}.
\]
\end{remark}

The following algebraic identity, derived from the $J$-unitary structure, is the key ingredient.
\begin{lemma}\label{lem:sym}
For any $a,b\in B$,
\begin{equation}\label{eq:sym}
1-\|\phi_{a,I}(b)\|^2 = \frac{(1-\|a\|^2)(1-\|b\|^2)}{|1-\langle b,a\rangle|^2}.
\end{equation}
\end{lemma}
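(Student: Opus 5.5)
The plan is to derive \eqref{eq:sym} from the $J$-contraction identity $M_{\phi_{a,I}}^{\ast} J M_{\phi_{a,I}} = \alpha(a)^2 J$ recorded in Section~\ref{sec:prelim}. The starting point is the observation that the block matrix acts projectively as the M\"obius map. For $b\in B$, consider the vector $v=(b,1)\in\H\oplus\mathbb{C}$. By the explicit action formula,
\[
M_{\phi_{a,I}}(b,1) = \bigl((P_a+\alpha(a)Q_a)b - a,\; 1-\langle b,a\rangle\bigr) = (1-\langle b,a\rangle)\,\bigl(\phi_{a,I}(b),1\bigr).
\]
This is the same computation already used in the proof of Theorem~\ref{ortho: aut} at the boundary point $-a/\|a\|$; the scalar $1-\langle b,a\rangle$ is nonzero since $|\langle b,a\rangle|<1$.

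Next, evaluate the indefinite form $\langle Jv,v\rangle$ on both sides. For any $(x,\lambda)$ we have $\langle J(x,\lambda),(x,\lambda)\rangle=\|x\|^2-|\lambda|^2$. So $\langle Jv,v\rangle=\|b\|^2-1$. On the other hand,
\[
\langle J M_{\phi_{a,I}}v, M_{\phi_{a,I}}v\rangle = |1-\langle b,a\rangle|^2\bigl(\|\phi_{a,I}(b)\|^2-1\bigr).
\]
The $J$-contraction identity gives $\langle JM v,Mv\rangle=\langle M^\ast J M v,v\rangle=\alpha(a)^2\langle Jv,v\rangle=(1-\|a\|^2)(\|b\|^2-1)$. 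Equating the two expressions, changing signs and dividing by $|1-\langle b,a\rangle|^2$ yields \eqref{eq:sym}.

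The only step needing care is the $J$-contraction identity itself, which the paper states but does not verify. If I must justify it, I would check it blockwise. Write $A=P_a+\alpha(a)Q_a$, which is self-adjoint and satisfies $Aa=a$. Then:
\begin{itemize}
\item the $(1,1)$ entry is $A^2-aa^\ast = P_a+\alpha^2Q_a-\|a\|^2P_a=\alpha^2 I$;
\item the $(1,2)$ entry is $-Aa+a=0$;
\item the $(2,2)$ entry is $\|a\|^2-1=-\alpha^2$.
\end{itemize}
This is a routine computation, so I expect no real obstacle. The main point is simply recognizing the projective action $v\mapsto(1-\langle b,a\rangle)(\phi_{a,I}(b),1)$.
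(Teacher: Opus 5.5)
Your proof is correct and is essentially the paper's argument: both apply the $J$-identity to the vector $(b,1)$ and use $M_{\phi_{a,I}}(b,1)=(1-\langle b,a\rangle)(\phi_{a,I}(b),1)$. The paper phrases this with the normalized $J$-unitary $\widetilde{M}_{\phi_{a,U}}$ instead of $M_{\phi_{a,I}}$ and the factor $\alpha(a)^2$, and your blockwise check of $M_{\phi_{a,I}}^{\ast}JM_{\phi_{a,I}}=\alpha(a)^2J$ is correct and supplies a step the paper only asserts.
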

 
\begin{proof}
Apply the $J$-unitary identity $\widetilde{M}_{\phi_{a,U}}^*J\widetilde{M}_{\phi_{a,U}} = J$ to the vector $(b,1)\in\mathcal{H}\oplus\mathbb{C}$:
\[
\left\langle J\widetilde{M}_{\phi_{a,U}}\begin{pmatrix}
    b\\
    1
\end{pmatrix}, \widetilde{M}_{\phi_{a,U}}\begin{pmatrix}
    b\\
    1
\end{pmatrix}\right\rangle = \left\langle J\begin{pmatrix}
    b\\
    1
\end{pmatrix},\begin{pmatrix}
    b\\
    1
\end{pmatrix}\right\rangle = \|b\|^2-1.
\]
On the other hand,
\[
\left\langle J\widetilde{M}_{\phi_{a,U}}\begin{pmatrix}
    b\\
    1
\end{pmatrix}, \widetilde{M}_{\phi_{a,U}}\begin{pmatrix}
    b\\
    1
\end{pmatrix}\right\rangle =\frac{|1-\langle b, a \rangle|^2}{\alpha(a)^2}\left\langle J\begin{pmatrix}
   \phi_{a,U}(b)\\
    1
\end{pmatrix}, \begin{pmatrix}
   \phi_{a,U}(b)\\
    1
\end{pmatrix}\right\rangle 
\]
Equating and rearranging we get \ref{lem:sym}.
\end{proof}
 
\begin{proof}[Proof of Theorem~\ref{thm:hyp}]
 
\noindent\textit{Part (a): Submultiplicativity.}
 
\medskip
 
\noindent Recall that
\[
\widetilde{M}_{\phi_{a,U}}\odot \widetilde{M}_{\phi_{b,U}} = \widetilde{M}_{\phi_{a,U}\circ \phi_{b,U}}.
\]
Therefore, by (\ref{Eqn:norm}), we have
\begin{align*}
\|\widetilde{M}_{\phi_{a,U}}\odot\widetilde{M}_{\phi_{b,U}}\| & = \sqrt{\frac{1+\|\phi_{a,U}\circ \phi_{b,U}(0)\|}{1-\|\phi_{a,U}\circ \phi_{b,U}(0)\|}}\\
& = \sqrt{\frac{1+\|U\phi_{a,I}(-Ub)\|}{1-\|U\phi_{a,I}(-Ub)\|}}\\
& = \sqrt{\frac{1+\|\phi_{a,I}(-Ub)\|}{1-\|\phi_{a,I}(-Ub)\|}}
\end{align*}
By Cauchy--Schwarz, $|\langle b,a\rangle|\leq\|a\|\|b\|$, so $|1-\langle -Ub,a\rangle| = |1+\langle Ub,a\rangle|\leq 1+\|a\|\|b\|$. From Lemma~\ref{lem:sym} applied to $-Ub$ in place of $b$:
\[
1-\|\phi_{a,I}(-Ub)\|^2 \geq \frac{(1-\|a\|^2)(1-\|b\|^2)}{(1+\|a\|\|b\|)^2}.
\]
Since $(1+\|a\|\|b\|)^2-(1-\|a\|^2)(1-\|b\|^2) = (\|a\|+\|b\|)^2$, we get:
\[
\|\phi_{a,I}(-Ub)\| \leq \frac{\|a\|+\|b\|}{1+\|a\|\|b\|}.
\]
Since $x\mapsto\sqrt{(1+x)/(1-x)}$ is increasing:
\[
\|\widetilde{M}_{\phi_{a,U}}\odot\widetilde{M}_{\phi_{b,U}}\|
\leq \sqrt{\frac{(1+\|a\|)(1+\|b\|)}{(1-\|a\|)(1-\|b\|)}}
= \|\widetilde{M}_{\phi_{a,U}}\|\,\|\widetilde{M}_{\phi_{b,U}}\|. 
\]
 
\medskip

\noindent\textit{Part (b): $(F_U,d_U)$ is a metric space isometric to $(B,d_{\mathrm{hyp}})$.}
 
\medskip
 
\noindent This follows directly, from the following observation. Since $\phi_{b,U}^{-1} = \phi_{-b,I}\circ U^{-1}$, we compute $(\phi_{a,U}\circ\phi_{b,U}^{-1})(0) = \phi_{a,U}(b) = U\phi_{a,I}(b)$. Consequently,
\[
d_U(\widetilde{M}_{\phi_{a,U}},\widetilde{M}_{\phi_{b,U}})
= 2\ln\|\widetilde{M}_{\phi_{a,U}\circ\phi_{b,U}^{-1}}\|
= \ln\frac{1+\|\phi_{a,I}(b)\|}{1-\|\phi_{a,I}(b)\|} =  d_{\mathrm{hyp}}(a,b),
\]
by \cite{Goebel-Reich, FV}, which completes the proof.
\end{proof}
\noindent However, an independent proof can be obtained by using the submultiplicativity of norm proved in Part (a).

\begin{remark}
The positivity and symmetry can be verified directly. Triangle inequality follows from the submultiplicativity of norm. Indeed, for any $\widetilde{M}_{\phi_{a,U}},\widetilde{M}_{\phi_{b,U}},\widetilde{M}_{\phi_{c,U}}\in F_U$, we have
\begin{align*}
d_U(\widetilde{M}_{\phi_{a,U}},\widetilde{M}_{\phi_{b,U}}) & = 2~ln~\left\|\phi_{a,U}\circ\phi_{b,U}^{-1}\right\|\\
& = 2~ln~\left\|\phi_{a,U}\circ{\phi_{c,U}^{-1}}\circ \phi_{c,U}\circ \phi_{b,U}^{-1}\right\|\\
& \leq 2~ln~\left\|\phi_{a,U}\circ{\phi_{c,U}^{-1}}\|\|\phi_{c,U}\circ \phi_{b,U}^{-1}\right\|\\
& = 2~ln~\left\|\phi_{a,U}\circ{\phi_{c,U}^{-1}}\right\| + 2~ln~\left\|\phi_{c,U}\circ \phi_{b,U}^{-1}\right\|\\
& = d_U(\widetilde{M}_{\phi_{a,U}},\widetilde{M}_{\phi_{c,U}})
+ d_U(\widetilde{M}_{\phi_{c,U}},\widetilde{M}_{\phi_{b,U}}).
\end{align*}

\medskip

\noindent Now, the map $\Gamma: (F_U,d_U)\to(B,d_{\mathrm{hyp}})$ defined by $\Gamma(\widetilde{M}_{\phi_{a,U}}) = a$ gives the required isometry.
\end{remark}

\begin{remark}
The submultiplicativity in Part~(a) differs from the usual submultiplicativity of the operator norm $\|AB\|\leq\|A\|\|B\|$; here the product $\odot$ is defined via composition of automorphisms which differs from usual operator multiplication.
\end{remark}

\section{Data Availability}

No data is associated to the manuscript.

\section{Conflict of Interest}

Author declares no conflict of interest.

\end{document}